\newif\ifShowTodo\ShowTodofalse
\newcommand{\todoInl}[1]{{\todo[inline, color=red!70]{#1}}}
\newcommand{\todoInl}[1]{}
\newcommand{\note}[1]{\todo[color=yellow!40]{#1}}
\newcommand{\question}[1]{\todo[color=blue!40]{#1}}
\newtheorem{theorem}{Theorem}[section]
\newtheorem{definition}[theorem]{Definition} \newtheorem{lemma}[theorem]{Lemma}
\newtheorem{proposition}[theorem]{Proposition}
\newtheorem{corollary}[theorem]{Corollary}
 \renewcommand{\P}{\operatorname*{\mathbf{P}}}
\newcommand{\E}{\operatorname*{\mathbf{E}}}
\newcommand{\Var}{\operatorname*{\mathbf{Var}}}
\DeclareMathOperator{\length}{length}
\DeclareMathOperator{\Dist}{{\bf D}}
\DeclareMathOperator{\CONST}{CONST} \DeclareMathOperator{\DICT}{DICT}
\DeclareMathOperator{\NONMANIP}{NONMANIP} 
 \DeclareMathOperator{\Inf}{Inf}
\newcommand{\adj}[2]{\ensuremath{[#1:#2]}}
\newcommand{\adji}[3]{\ensuremath{\adj{#1}{#2}_{#3}\,}}
\newcommand{\I}{\ensuremath{\mathrm{I}}}
\newcommand{\wt}{\widetilde} 
\newcommand{\poly}{\ensuremath{\mathrm{poly}}}  
\definecolor{Red}{rgb}{1,0,0} \definecolor{Blue}{rgb}{0,0,1}
\definecolor{Olive}{rgb}{0.41,0.55,0.13} \definecolor{Green}{rgb}{0,1,0}
\definecolor{MGreen}{rgb}{0,0.8,0} \definecolor{DGreen}{rgb}{0,0.55,0}
\definecolor{Yellow}{rgb}{1,1,0} \definecolor{Cyan}{rgb}{0,1,1}
\definecolor{Magenta}{rgb}{1,0,1} \definecolor{Orange}{rgb}{1,.5,0}
\definecolor{Violet}{rgb}{.5,0,.5} \definecolor{Purple}{rgb}{.75,0,.25}
\definecolor{Brown}{rgb}{.75,.5,.25} \definecolor{Grey}{rgb}{.5,.5,.5}
\definecolor{Black}{rgb}{0,0,0}
\newcommand{\eps}{\epsilon}
\newcommand{\plur}{\mathbf{pl}}
\newcommand{\remove}[1]{}
\newcommand{\inverse}[1]{f^{-1}({#1})}
\begin{document}

\title{The Geometry of Manipulation - a Quantitative Proof of the Gibbard Satterthwaite Theorem}

\author{
  Marcus Isaksson
  \thanks{
    Chalmers University of Technology and G{\"o}teborg University,
    SE-41296 G{\"o}teborg, Sweden.
    maris@chalmers.se.
  }
  \and
  Guy Kindler\thanks{Incumbent of the Harry and Abe Sherman Lectureship Chair at the Hebrew Univeristy of Jerusalem. Supported by the Israel Science Foundation and by the Binational Science Foundation. }
  \and
  Elchanan Mossel
  \thanks{Weizmann Institute and U.C. Berkeley
    mossel@stat.berkeley.edu.
    Weizmann Institute of Science and U.C. Berkeley. Supported by DMS 0548249 (CAREER) award, by ISF grant 1300/08, by a  Minerva Foundation grant and by an ERC Marie Curie Grant 2008 239317. 
  } }

\maketitle

\begin{abstract}
  We prove a quantitative version of the Gibbard-Satterthwaite
  theorem.  We show that a uniformly chosen voter profile for a
  neutral social choice function $f$ of $q \geq 4$ alternatives and
  $n$ voters will be manipulable with probability at least $10^{-4}
  \eps^2 n^{-3} q^{-30}$, where $\eps$ is the minimal statistical
  distance between $f$ and the family of dictator functions.

 Our results extend those of \cite{FrKaNi:08}, which were obtained
    for the case of $3$ alternatives, and imply that the approach of
    masking manipulations behind computational hardness (as considered
    in
    \cite{BarthOrline:91,ConitzerS03b,ElkindL05,ProcacciaR06,ConitzerS06})
    cannot hide manipulations completely.

  Our proof is geometric. More specifically it extends the method of
  canonical paths to show that the measure of the profiles that lie on
  the interface of $3$ or more outcomes is large. To the best of our
  knowledge our result is the first isoperimetric result to establish
  interface of more than two bodies.
\end{abstract}

\newpage

\section{Introduction}

Social choice theory studies methods of collective decision making,
and their interplay with social welfare and individual preference and
behavior. Rigorous study of social choice dates back to the
18'{th} century, when Condorcet discovered the following voting
paradox: in a social ranking of three alternatives that is determined
by the majority vote, an `irrational' circular ranking may occur where
a candidate $A$ is preferred over a candidate $B$, $B$ is preferred
over $C$, and $C$ is preferred over $A$. Social choice
theory in its modern form was established in the 1950's with the discovery of Arrow's
impossibility theorem~\cite{Arrow:50,Arrow:63}, which showed that all social ranking systems
that satisfy a few reasonable conditions must either obtain irrational
circular outcomes, or be dictatorships (a dictatorship is a system
where the ranking is determined by just one voter).

\paragraph{Manipulations.} Many of the results in the study of social
choice are negative, showing that certain desired properties of social
choice schemes cannot be attained.
One of the hallmark examples of such theorems was proved by
Gibbard and Satterthwaite~\cite{Gibbard:73,Satterthwaite:75}. Their theorem considers a
voting system where each of $n$ voters rank $q$ alternatives, and the
winner is determined according to some pre-defined \emph{social choice
  function} $f \colon L_q^n \rightarrow [q]$ of all the voters'
rankings---here $L_q$ denotes the set of total orderings of the $q$
alternatives.

We say that a social choice function is {\em manipulable}, if a
situation may occur where a voter who knows the rankings given by
other voters can change her own ranking in a way that does not reflect
her true preferences, but which leads to an outcome that is more
desirable to her. Formally

\begin{definition}[Manipulation point]
  For a ranking $x \in L_q$, write $a \stackrel{x}{>} b$ to denote
  that the alternative $a$ is preferred by $x$ over $b$. A social
  choice function $f \colon L_q^n \rightarrow [q]$ is
  \emph{manipulable} at $x \in L_q^n$ if there exist a $y \in L_q^n$
  and $i\in [n]$ such that $x$ and $y$ only differ in the $i$'th
  coordinate and
  \begin{equation}
    f(y) \stackrel{x_i}{>} f(x)
  \end{equation}
  In this case we also say that $x$ is a {\em manipulation point} of $f$,
  and that $(x,y)$ is {\em a manipulation pair} for $f$.
  We say that $f$ is {\em manipulable}, if it is manipulable at some point $x$.
  We also say that $x$ is an $r$-{\em manipulation point} of $f$,
  if $f$ has a manipulation pair $(x,y)$ such that $y$ is obtained from
  $x$ by permuting (at most) $r$ adjacent alternatives in one of the
  coordinates of $x$.
\end{definition}

Gibbard and Satterthwaite proved that any social
choice function which attains three or more values, and whose outcome
does not depend on just one voter, must be manipulable.

\begin{theorem}[Gibbard-Satterthwaite~\cite{Gibbard:73,Satterthwaite:75}]
  \label{thm:GS}
  Any social choice function $f \colon L_q^n \rightarrow [q]$ which takes at
  least three values and is not a dictator is manipulable.
\end{theorem}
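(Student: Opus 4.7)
The plan is to derive the theorem from Arrow's impossibility theorem by extracting, from any non-manipulable $f$, a social welfare function $F$ that must itself be a dictatorship. Throughout I argue by contradiction, assuming $f\colon L_q^n\to[q]$ takes at least three values, is non-manipulable, and is not a dictator.

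The first step is a monotonicity lemma: if $f(x)=a$ and $y$ differs from $x$ only in that one voter $i$ pushes $a$ upward in her ranking while preserving the relative order of the other alternatives, then $f(y)=a$. This is forced by non-manipulability, since if $f(y)=b\neq a$, then exactly one of $(x,y)$ and $(y,x)$ is a manipulation pair---depending on whether voter $i$ ranks $b$ above or below $a$ in $x_i$---and we have a contradiction either way. I would also record the consequence that $f$ is onto its range after restricting, which lets me henceforth treat $f$ as surjective onto a set of size at least three.

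The core construction is as follows. For each unordered pair $\{a,b\}$ of alternatives in the range of $f$ and each profile of $\{a,b\}$-preferences $(\prec_1,\dots,\prec_n)$, I pick any full profile $x\in L_q^n$ in which every voter places $a$ and $b$ in the top two positions, in the order given by $\prec_i$. Using monotonicity one first shows $f(x)\in\{a,b\}$, and I then define $F_{ab}(\prec_1,\dots,\prec_n):=f(x)$. The main obstacle is to prove $F_{ab}$ is well-defined, i.e.\ independent of the chosen completion of the rankings. I would do this by an interpolation argument: any two admissible completions are connected by a chain of single-voter single-adjacent-swap moves that only shuffle positions strictly below $a$ and $b$, and non-manipulability together with monotonicity rule out any change in the outcome $f$ along this chain.

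Finally, independence of irrelevant alternatives for $F$ is built into the construction; Pareto follows by iterating monotonicity upward (if all voters rank $a$ above $b$, pushing $a$ to the top of each ranking cannot change the winner to $b$); and transitivity on three-element subsets reduces to a short case analysis, again using non-manipulability on profiles where the three alternatives are placed at the top. Arrow's theorem then yields a dictator $i^*$ for $F$, and one last monotonicity argument lifts this back to $f$: for any profile $x$, pushing $i^*$'s top alternative $a$ to the top of every voter's ranking forces the outcome to be $a$ by Pareto, and unwinding these pushes one voter at a time via monotonicity shows $f(x)=a$, contradicting the assumption that $f$ is not a dictator. I expect well-definedness of $F_{ab}$ to be the only technically delicate step; every other ingredient is routine once the monotonicity lemma and this interpolation are in place.
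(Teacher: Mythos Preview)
The paper does not prove Theorem~\ref{thm:GS}; it is stated with a citation to Gibbard and Satterthwaite and then used as a black box (for instance inside the proofs of Lemma~\ref{lem:inverseImageBound}, Lemma~\ref{lem:nonManipTriple}, and Lemma~\ref{lem:revInverseImageBound}). So there is no ``paper's own proof'' to compare against.

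On its own merits, your sketch follows the classical Satterthwaite route---deduce strong (Maskin) monotonicity from non-manipulability, build a binary social welfare function $F$ from the restrictions of $f$ to profiles with two alternatives on top, verify Pareto and IIA, invoke Arrow, and lift the Arrow dictator back to $f$---and this is a correct strategy. Two small points are worth tightening. First, your monotonicity lemma as stated (only ``push $a$ upward while fixing the relative order of the others'') is a special case; the well-definedness and Pareto arguments you outline actually need the full statement that any change to voter $i$'s ranking which does not lower $a$ relative to any other alternative preserves $f=a$. The same one-line manipulation argument you give proves this stronger version, so just state it that way. Second, your final lifting step (``push $i^*$'s top alternative to the top everywhere, get $a$ by Pareto, then unwind via monotonicity'') does not work as written: unwinding lowers $a$ in the non-dictator coordinates, and monotonicity gives no control in that direction. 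The clean argument instead uses the dictator property of $F$ directly: if $f(x)=b$ while $i^*$ ranks some $a\in\mathrm{range}(f)$ above $b$, raise $a$ and $b$ to the top two slots in every coordinate while preserving each voter's $a$--$b$ order; monotonicity for $b$ gives $f=b$ at the new profile, but that profile is of the type defining $F_{ab}$ and the Arrow dictator forces $F_{ab}=a$, a contradiction. With these two fixes your outline is complete.
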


The Gibbard-Satterthwaite theorem has contributed significantly to the realization that it is unlikely to expect truthfulness
in the context of voting. In a way, this and other results in social choice theory, contributed to the
development of mechanism design, a field centered around
developing social mechanisms that obtain desirable results even when
each member of the society acts selfishly.

\paragraph{Quantitative social choice.}
Theorem~\ref{thm:GS} is tight in the sense that \emph{monotone} social choice
functions which are dictators or only have two possible outcomes
are indeed non-manipulable (a function is non-monotone, and clearly manipulable, if for some set of rankings a voter can change the outcome from say $a$ to $b$ by moving $a$ ahead of $b$ in his preference).
\question{Define monotone better?}
It is interesting, however, to study manipulation quantitatively, asking not
just whether a function is manipulable but how many manipulations
occur in it.
\note{Removed def. of $\P$ and $\E$ since it wasn't used that way throughout.}
To state results in quantitative social choice we need
to define the distance between social choice functions.

\begin{definition}[Distance between social choice functions]
The
distance $\Dist(f,g)$ between two social choice functions $f,g \colon L_q^n
\rightarrow [q]$ is defined as the fraction of inputs on which they
differ: $\Dist(f,g) = \P[f(X) \neq g(X)]$, where $X \in L_q^n$ is
uniformly selected.
For a class $G$ of social functions, we write
$\Dist(f,G) = \min_{g \in G} \Dist(f,g)$.
\end{definition}

We also define some classes of functions that may not have any
manipulation points.

\begin{definition}
  We use the following three classes of functions, defined for
  parameters $n$ and $q$ that remain implicit (when used, the
  parameters will be obvious from the context):
\begin{align*}
  \CONST &= \{f \colon L_q^n \rightarrow [q] \mid f \text{ is constant }\}
  \\
  \DICT_i &= \{f \colon L_q^n \rightarrow [q] \mid f \text{ only depend on the
    $i$:th coordinate }\} \text{ , for } i \in [n]
  \\
  \DICT &= \cup_{i=1}^n \DICT_i
  \\
  \NONMANIP &= \{f \colon L_q^n \rightarrow [q] \mid f \text{ is either a
    dictator or takes at most two values}\}
\end{align*}
\question{$\NONMANIP$ is bad name!}
\end{definition}

\subsection{Our results}

Our results only apply to social choice functions which are {\em
  neutral}. A social choice function is neutral if it is invariant under
changes made to the names of the alternatives (see
Definition~\ref{def:neutrality} for a formal description). In our
first main result we show the following lower bound on the number of
manipulation points in a neutral social function:
\begin{theorem}
  \label{thm:neutralBound}
  Fix $q \ge 4$ and
  let $f \colon L_q^n \rightarrow [q]$ be a neutral social choice function
  with $\Dist(f, \DICT) \ge \eps$.
  \question{DICT contains manipulable functions. Can we restrict to only monotone dictators which should be the \emph{right} set to compare with.}
  Then,
  \begin{equation}
    \P(f\text{ is manipulable at } X)
    \ge
    \frac{\eps^2}{2n^3q^6(q!)^2}
  \end{equation}
  where $X\in L_q^n$ is selected uniformly.
\end{theorem}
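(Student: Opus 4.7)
The plan is to extend the canonical-paths method of Friedgut--Kalai--Nisan from $q=3$ to $q\ge 4$ by proving an isoperimetric lower bound on the measure of the \emph{three-way interface} of $f$, and then to convert interface profiles into manipulation points using neutrality. Neutrality immediately forces $\P[f(X)=a]=1/q$ for every $a\in[q]$, so each level set $F_a := f^{-1}(a)$ has measure exactly $1/q$. Define the three-way interface $I_3$ to be the set of profiles $x\in L_q^n$ for which there exist a coordinate $i$ and two rankings $x_i',x_i''$, each obtained from $x_i$ by an adjacent transposition, such that $f(x)$, $f(x_1,\ldots,x_i',\ldots,x_n)$, $f(x_1,\ldots,x_i'',\ldots,x_n)$ are three pairwise distinct outcomes $a,b,c$. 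At such an $x$, voter $i$ can steer the outcome among $\{a,b,c\}$ by an adjacent transposition; unless $f(x)$ is $i$'s most preferred element of $\{a,b,c\}$ under $x_i$, voter $i$ has an immediate $2$-manipulation. A pigeonhole argument over $i$'s preferences, combined with the symmetry coming from neutrality, shows that a $q$-dependent constant fraction of $I_3$ consists of actual manipulation points, so the manipulation probability is at least a $\poly(q)^{-1}$ factor times $|I_3|/(q!)^n$.

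The core of the proof is therefore the isoperimetric bound $|I_3|/(q!)^n \ge \Omega\bigl(\eps^2/(n^3 q^6 (q!)^2)\bigr)$. Fix three outcomes $a,b,c$. For each ordered triple $(u,v,w)\in F_a\times F_b\times F_c$ I would construct a canonical \emph{tripod}: three paths in the adjacent-transposition Cayley graph of $L_q^n$ joining $u,v,w$ to a common meeting point $m=m(u,v,w)$, obtained by a coordinate-by-coordinate interpolation akin to the FKN canonical paths but with three sources rather than two. Because the endpoints lie in three different level sets, $m$ (or one of its adjacent-transposition neighbors) must see all three outcomes $a,b,c$ locally, hence lie in $I_3$. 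The bound then follows by comparing the $|F_a||F_b||F_c|=(q!)^{3n}/q^3$ triples to the maximal congestion of $I_3$ under tripods, estimated at $\poly(n,q)\cdot(q!)^{2n}/\eps^2$; the factor $\eps^2$ enters because the distance-from-dictator hypothesis has to be invoked twice in the congestion bound, once to prevent the $u\leadsto v$ strand from collapsing onto a single voter's fiber, and once again to prevent the third strand from being absorbed before branching off.

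The main obstacle is precisely the tripod construction and its congestion analysis. Ordinary two-body canonical paths give an $\Omega(\eps^2/\poly(n,q))$ lower bound on the pairwise $F_a$-$F_b$ boundary, but producing a profile on the three-way interface requires routing three pieces of mass to a single meeting point and certifying that the meeting point witnesses three outcomes \emph{simultaneously} rather than merely lying on one pairwise boundary --- this is the novel isoperimetric step flagged in the abstract. The assumption $\Dist(f,\DICT)\ge\eps$ (rather than the weaker $\Dist(f,\CONST)\ge\eps$) is what prevents either of the two outer strands from being trivially shortcut by a single voter's fiber, and the requirement $q\ge 4$ is exactly what guarantees that three distinct outcomes can be reached from a common ranking by adjacent transpositions within $L_q$, leaving enough combinatorial room for the tripod's meeting point to genuinely sit on the three-way interface.
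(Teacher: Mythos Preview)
Your proposal diverges from the paper's argument and has a genuine gap at its central step.

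The gap is the claim that the tripod meeting point $m=m(u,v,w)$ lies in (or adjacent to) the three-way interface $I_3$. Three paths from $u\in F_a$, $v\in F_b$, $w\in F_c$ converging at $m$ tell you nothing about the values $f$ takes on adjacent-transposition neighbours of $m$ in a \emph{single} coordinate. Each strand of the tripod has a transition where $f$ changes value, but those transitions occur at different places along different strands, generally in different coordinates; nothing forces them to coalesce near $m$, which may sit deep inside one level set. Your congestion sketch likewise does not explain how $\Dist(f,\DICT)\ge\eps$ enters: ``invoking the hypothesis twice in the congestion bound'' is a guess at where $\eps^2$ should appear, not an argument. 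Finally, your stated reason for $q\ge 4$ (room for three adjacent-transposition neighbours with three outcomes) is incorrect; that already holds at $q=3$.

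The paper proceeds quite differently. It first shows (Lemma~\ref{lem:boundaries1} and Corollary~\ref{cor:neutralPairs}) that the hypothesis $\Dist(f,\DICT)\ge\eps$ yields two \emph{distinct coordinates} $i\neq j$ and four \emph{distinct alternatives} $a,b,c,d$ with $\Inf_i^{a,b}(f),\,\Inf_j^{c,d}(f)\ge \eps/(nq^2(q-1))$; this is the actual role of $q\ge 4$ and of neutrality. It then builds canonical paths whose vertices are \emph{boundary edges} rather than profiles: a path runs from an edge $(x,x')\in B_i^{a,b}$ to an edge $(z,z')\in B_j^{c,d}$, updating one coordinate at a time. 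Because every vertex of such a path is a pair already witnessing two values of $f$, some step must transition from the pair-values $(a,b)$ to something else; at that step the four profiles $u,u',v,v'$ involved differ in only two coordinates and $f$ takes at least three values on them (or one of them is already a manipulation point). Applying the qualitative Gibbard--Satterthwaite theorem to the restriction of $f$ to those two coordinates produces a manipulation point. The $\eps^2$ then arises transparently as $|B_i^{a,b}|\cdot|B_j^{c,d}|$, and the congestion $2n(q!)^{n+4}$ is a direct count (Lemma~\ref{lem:inverseImageBound}). The trick you are missing is to route \emph{edges} that already carry two values, rather than single profiles, so that a three-value configuration is forced at a well-localised transition.
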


Note that the result above directly implies the following:
\begin{corollary} \label{cor:NeutralBound}
Fix $q \ge 4$ and
  let $f \colon L_q^n \rightarrow [q]$ be a neutral social choice function
  with $\Dist(f, \DICT) \ge \eps$.
  Then,
\[
\P((X,Y) \text{ is a manipulable pair for } f) \ge \frac{\eps^2}{2n^4 q^6(q!)^3},
\]
 where $X\in L_q^n$ is selected uniformly, and $Y$ is obtained from
 $X$ by uniformly selecting a coordinate $i\in\{1,..,n\}$ and
  resetting the $i$'th coordinate to a random preference.
\end{corollary}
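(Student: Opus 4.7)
The plan is to deduce the corollary directly from Theorem~\ref{thm:neutralBound} by a simple witness-counting argument. Theorem~\ref{thm:neutralBound} guarantees that a uniformly random $X\in L_q^n$ is a manipulation point with probability at least $\eps^2/(2n^3q^6(q!)^2)$. What we now need is to show that once $X$ is a manipulation point, the independent resampling that produces $Y$ has a reasonable chance of actually hitting a manipulating partner.

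More precisely, I would let $M\subseteq L_q^n$ be the set of manipulation points of $f$ and, for each $x\in M$, fix (by the definition of manipulability) a coordinate $i(x)\in[n]$ and a ranking $w(x)\in L_q$ such that the profile $y^{(x)}$ obtained from $x$ by replacing its $i(x)$'th coordinate with $w(x)$ satisfies $(x,y^{(x)})$ is a manipulation pair. In the sampling described in the corollary, given $X=x$, the probability that the chosen coordinate equals $i(x)$ is $1/n$, and conditioned on that, the probability that the resampled ranking equals $w(x)$ is $1/q!$. Hence
\[
\P\bigl((X,Y)\text{ is a manipulation pair for }f\,\bigm|\,X=x\bigr)\ \ge\ \frac{1}{n\,q!}\qquad\text{for every }x\in M.
\]

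Taking expectation over $X$ and plugging in the bound from Theorem~\ref{thm:neutralBound} gives
\[
\P\bigl((X,Y)\text{ is a manipulation pair}\bigr)\ \ge\ \P(X\in M)\cdot\frac{1}{n\,q!}\ \ge\ \frac{\eps^2}{2n^3 q^6 (q!)^2}\cdot\frac{1}{n\,q!}\ =\ \frac{\eps^2}{2n^4 q^6 (q!)^3},
\]
which is exactly the claimed bound. There is no real obstacle here; the only mild subtlety is that we use a single fixed witness $(i(x),w(x))$ per manipulation point (rather than summing over all witnesses), which suffices for the lower bound and avoids any double-counting issues.
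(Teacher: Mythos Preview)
Your proposal is correct and is exactly the ``direct implication'' the paper has in mind: the paper does not spell out a proof of Corollary~\ref{cor:NeutralBound}, merely noting that it follows immediately from Theorem~\ref{thm:neutralBound}, and the extra factor of $1/(n\,q!)$ in the bound is precisely the probability that the random coordinate and random preference hit a fixed witness for a given manipulation point $x$, just as you argue.
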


The result above has super exponential dependency on the number of alternatives
$q$. A more refined analysis yields the following theorem.

\begin{theorem}[main theorem]
  \label{thm:refNeutralBound}
  Fix $q \ge 4$ and
  let $f \colon L_q^n \rightarrow [q]$ be a neutral social choice function
  with $\Dist(f, \DICT) \ge \eps$.
  Then,
  \begin{equation}
    \P(f\text{ is manipulable at } X)
\geq     \P(X\text{ is a } 4\text {-manipulation point of } f) \geq
    \frac{\eps^2}{10^4 n^3 q^{30}}
  \end{equation}
  where $X\in L_q^n$ is uniformly selected.
\end{theorem}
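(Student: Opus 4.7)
The plan is to extend the method of canonical paths from the classical two-body setting to a three-body isoperimetric inequality on the Cayley graph of $L_q^n$ with adjacent-transposition edges. Observe first that neutrality together with $q \ge 3$ forces $\P[f(X) = a] = 1/q$ for every alternative $a$: any $\sigma \in S_q$ maps $f^{-1}(a)$ bijectively onto $f^{-1}(\sigma(a))$, so all $q$ level sets of $f$ have equal and substantial mass, making three-way interfaces meaningful objects to seek.

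Call a profile $x$ a \emph{3-junction} of $f$ if within the ball of radius $4$ around $x$, measured by adjacent-transposition distance inside a single voter's ranking, the function $f$ takes at least three distinct values. The proof splits into a structural step and an isoperimetric step. For the structural step, I would show that every 3-junction is a 4-manipulation point of $f$: given $f(x)=a$ with outcomes $b, c$ reachable from $x$ by $\le 4$ swaps in coordinates $i$ and $j$ respectively, a case analysis on the positions of $a, b, c$ in $x_i$ and $x_j$ produces some neighbor $y$ of $x$ with $f(y) \stackrel{x_k}{>} f(x)$ for some $k \in \{i, j\}$, since among three distinct alternatives in a voter's total order some pair must be arranged in favor of the nearby outcome. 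This is where the third outcome is essential: a mere 2-interface would not suffice, because the voter might already prefer the current outcome to the only nearby alternative.

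For the isoperimetric step, I would lower bound the measure of 3-junctions by $\eps^2 / (10^4 n^3 q^{30})$ via a triple canonical-paths argument. Because $\Dist(f, \DICT) \ge \eps$, for some pair $(a,b)$ and coordinate $i$ there is an $\Omega(\eps/q^2)$-mass of profiles on which the $\{a,b\}$-restricted output of $f$ disagrees with the $i$-th dictator, producing a 2-interface between $f^{-1}(a)$ and $f^{-1}(b)$ of mass $\Omega(\eps/q^2)$. Given such a 2-interface point, one applies canonical paths a second time inside the conditional fiber to branch to a third-outcome profile via a short adjacent-swap path, landing a 3-junction on that path. Triangulating this into canonical triples $(x_a, x_b, x_c) \in f^{-1}(a) \times f^{-1}(b) \times f^{-1}(c)$ joined by paths meeting near a 3-junction, then bounding the congestion at each 3-junction by $\poly(n, q^{30})$, yields the claimed lower bound: the $n^3$ reflects path length $O(n)$ on each of three branches, and the $q^{30}$ absorbs the per-step combinatorial overhead from the adjacent-swap structure.

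The principal obstacle is the triple canonical-paths construction itself. Classical Cheeger-type inequalities are two-body and do not directly certify a 3-interface. To avoid the weaker $(q!)^2$-type losses that arise from wholesale ranking replacements, the paths must proceed one adjacent transposition at a time, so that each step has only $\poly(q)$ choices and the per-step combinatorial overhead remains polynomial in $q$. The $\eps^2$ factor should emerge from compounding two applications of the two-body bound — one producing a 2-interface of mass $\Omega(\eps)$ and a second, conditional, producing a branch to a third outcome with conditional mass $\Omega(\eps)$ — with the requisite conditional independence furnished by the product structure of $L_q^n$ and the neutrality of $f$.
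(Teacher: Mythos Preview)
Your structural step contains a genuine gap. You claim that every 3-junction $x$ is itself a 4-manipulation point, arguing that ``among three distinct alternatives in a voter's total order some pair must be arranged in favor of the nearby outcome.'' This is false as stated: if $f(x)=a$ and $a$ happens to be ranked at the top of both $x_i$ and $x_j$, then neither nearby outcome $b$ nor $c$ is preferred to $a$ by the relevant voter, and $x$ need not be a manipulation point at all. The paper does not claim that the junction profile is itself manipulable; instead, once three (or four) outcomes appear in a small neighborhood, it restricts $f$ to two coordinates (obtaining a two-voter function taking at least three values that is not a dictator) and invokes the qualitative Gibbard--Satterthwaite theorem to locate a manipulation point \emph{somewhere} in that restricted slice. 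The manipulation point is then close to, but not equal to, the junction, and the closeness is controlled well enough to bound preimages.

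Your isoperimetric step also diverges substantially from the paper and is not sufficiently specified to be correct. The paper does not build ``canonical triples'' of points in $f^{-1}(a)\times f^{-1}(b)\times f^{-1}(c)$. It instead first produces two large \emph{boundary} sets of edges, $B_i^{a,b;[a:b]}$ and $B_j^{c,d;[c:d]}$, for \emph{four} distinct alternatives $a,b,c,d$ and distinct coordinates $i\neq j$ (this is precisely why $q\ge 4$ is needed), and then builds a single canonical path in $(L_q^n)^2$ from each edge-pair in the first boundary to each edge-pair in the second. The path has a first segment preserving the order of $a,b$ and a last segment preserving the order of $c,d$; these constraints force a transition where either a third value appears (triggering Lemma~\ref{lem:nonManipTriple}) or the middle edge carries all four values $a,b,c,d$ in a small block, to which Gibbard--Satterthwaite is applied. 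The factor $\eps^2$ arises from the product $|B_i^{a,b;[a:b]}|\cdot|B_j^{c,d;[c:d]}|$, and $n^3$ from two factors of $n$ in the boundary lower bounds plus one factor of $n$ in the congestion bound of Proposition~\ref{prop:canon5}---not from ``three branches of length $O(n)$.'' Your compounding-two-applications heuristic for $\eps^2$ is not how the argument actually runs.
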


A result similar to Theorem~\ref{thm:refNeutralBound} was obtained for
the case $q=3$ in~\cite{FrKaNi:08}, but the result
of~\cite{FrKaNi:08} counted manipulation pairs rather than
manipulation points. Translating the bound on
the fraction of manipulation points in Theorem~\ref{thm:refNeutralBound}
directly to the case of pairs deteriorates the lower bound, inserting
a factor of $q!$ in the denominator.
However using the stronger bound on the fraction of 4-manipulation points,
a direct corollary lower bounds the fraction of
manipulation pairs of a certain kind while keeping the polynomial
dependency on $q$.
\begin{corollary}[manipulation pairs]
 \label{cor:refNeutralBound}
 Fix $q \ge 4$ and
 let $f \colon L_q^n \rightarrow [q]$ be a neutral social choice function
 with $\Dist(f, \DICT) \ge \eps$.
 Then,
 \begin{equation}
   \P((X,Y)\text{ is a manipulation pair for }f)
   \ge
   \frac{\eps^2}{10^{9} n^4 q^{34}}
 \end{equation}
 where $X\in L_q^n$ is uniformly selected, and $Y$ is obtained from
 $X$ by uniformly selecting a coordinate $i\in\{1,..,n\}$, then selecting $4$ adjacent alternatives in $X_i$ and
 randomly permuting them.
\end{corollary}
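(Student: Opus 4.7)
The plan is to deduce Corollary~\ref{cor:refNeutralBound} from Theorem~\ref{thm:refNeutralBound} by a short conditional-probability argument: every 4-manipulation point that Theorem~\ref{thm:refNeutralBound} produces is hit by the random sampling procedure of the corollary with probability at least $1/(24\,n\,(q-3))$, and multiplying the two bounds gives the claim.

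More concretely, I would first fix any 4-manipulation point $X$ of $f$. By definition there exist a coordinate $i^* \in [n]$ and a profile $y \in L_q^n$ agreeing with $X$ outside coordinate $i^*$, such that $(X,y)$ is a manipulation pair and $y_{i^*}$ is obtained from $X_{i^*}$ by permuting at most $4$ adjacent alternatives. I then fix any window $W^*$ of $4$ consecutive positions in $X_{i^*}$ that contains those permuted alternatives; since $q\ge 4$, at least one such window exists. Now the sampling procedure of the corollary draws $i$ uniformly from $[n]$, a 4-window $W$ uniformly from the $q-3$ possibilities, and a permutation $\pi$ uniformly from $S_4$. On the event $\{i = i^*,\; W = W^*,\; \pi = \pi^*\}$, where $\pi^* \in S_4$ is the unique permutation that reproduces $y_{i^*}|_{W^*}$ from $X_{i^*}|_{W^*}$ (extended by the identity on any positions of $W^*$ that the manipulation does not touch), one has $Y = y$ and so $(X,Y)$ is a manipulation pair. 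This event has probability $\tfrac{1}{24\,n\,(q-3)}$, independent of $X$.

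Combining this with Theorem~\ref{thm:refNeutralBound} yields
\begin{equation*}
\P\bigl((X,Y)\text{ is a manipulation pair for } f\bigr)
\;\geq\; \P\bigl(X \text{ is a 4-manipulation point of } f\bigr)\cdot \frac{1}{24\,n\,(q-3)}
\;\geq\; \frac{\eps^2}{24\cdot 10^{4}\,n^{4}\,(q-3)\,q^{30}},
\end{equation*}
and the trivial arithmetic $24 \cdot 10^{4}(q-3) \le 24 \cdot 10^{4} q \le 10^{9} q^{4}$ for $q \ge 4$ then yields the claimed bound $\eps^2/(10^{9}\,n^{4}\,q^{34})$.

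There is no real obstacle here; the heavy lifting happens inside Theorem~\ref{thm:refNeutralBound}, and this corollary is purely a bookkeeping step converting a bound on 4-manipulation points into a bound on manipulation pairs sampled by uniform reshuffles of $4$ adjacent alternatives. The only subtlety worth noting is that the underlying manipulation may involve strictly fewer than $4$ alternatives, but this causes no trouble: one simply embeds the shorter permutation into a 4-window by extending it with the identity, which is exactly what the $1/24$ factor in the counting absorbs.
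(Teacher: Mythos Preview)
Your argument is correct and is exactly the ``direct corollary'' computation the paper has in mind but does not write out: conditioning on $X$ being a $4$-manipulation point, the random choice of coordinate, $4$-window, and permutation hits a witnessing manipulation with probability at least $1/(24\,n\,(q-3))$, and multiplying by the bound from Theorem~\ref{thm:refNeutralBound} gives the stated inequality after the easy arithmetic. The handling of manipulations touching fewer than four alternatives by extending with the identity inside a $4$-window is the right way to resolve the only minor subtlety.
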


The case of large $q$, solved here, was left as the main open problem
in~\cite{FrKaNi:08}. Their main motivation was that deriving
quantitative versions of Gibbard-Satterthwaite theorems with
polynomial dependency of $q$ and $n$ would indicate that from the
computational complexity point of view it is easy on average to find
manipulation points. This point is discussed in more detail in
the related work subsection.

\medskip\noindent
Our lower bound for the number of manipulation points deteriorates
polynomially with the number of voters, $n$, and the number $q$ of
alternatives. Some polynomial deterioration as a
function of $n$ is necessary. This can be observed by considering the
plurality function $\plur:L_q^n\rightarrow[q]$, whose value is defined
to be the candidate which is top ranked by the largest number of
voters (break ties by picking the candidate which is top ranked by the
'leftmost' voter). It is easy to observe that a point where no ties
are formed is not a manipulation point of $\plur$, and that for any
fixed $q$ the fraction of points that do contain ties is polynomially
small in $n$. As for the dependency on $q$---we do not know whether it is necessary.

\subsection{History and related work}

The Gibbard-Satterthwaite theorem presented a difficulty in designing
social choice functions, namely that of strategic voting. A line of
research aimed at overcoming these difficulties suggested
constructions of social choice functions where it is computationally
difficult for a voter to find beneficial
manipulation~\cite{BaToTr:89,BarthOrline:91,ConitzerS03b,ElkindL05}.
However these constructions considered worst case analysis---they did
not rule out the possibility that \emph{on average}, finding a
manipulation may be easy. Indeed, some results showed that
finding manipulations is easy on average for certain restricted classes of
social choice functions~\cite{ProcacciaR06,ConitzerS06, Kelly93} (see
also the survey~\cite{procaccia-survey}).

Recently, a result of Friedgut, Kalai and Nisan~\cite{FrKaNi:08}
provided a very general result, showing that in the case of a neutral
social choice function between $3$ alternatives even a random
attempted manipulation is beneficial for a voter with non-negligible
probability.
 Adapted to
our notation, the main result of~\cite{FrKaNi:08} can be stated as
follows:
\begin{theorem}[\cite{FrKaNi:08}]
  \label{thm:FrKaNi}
  There exists a constant $C>0$ with the following property. Let $f
  \colon L_3^n \rightarrow [3]$ be a neutral social choice function
  with $\Dist(f, \DICT) \ge \eps$.
  Then,
  \begin{equation}
    \P((X,Y)\text{ is a manipulation pair for }f)
    \ge
    C \frac{\eps^2}{n}
  \end{equation}
  where $X\in L_3^n$ is uniformly selected, and $Y$ is obtained from
  $X$ by uniformly selecting a coordinate $i\in\{1,..,n\}$ and
  resetting the $i$'th coordinate to a random preference.
\end{theorem}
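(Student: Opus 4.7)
The plan is to prove the Friedgut--Kalai--Nisan theorem via a quantitative Arrow-theorem reduction, exploiting the special structure of $q=3$. First, I would note that each voter's ranking in $L_3$ is determined by the three pairwise preferences $a\,?\,b$, $b\,?\,c$, $a\,?\,c$, subject only to transitivity. This lets me embed the domain inside the product $\{\pm 1\}^{3n}$ (with a transitivity constraint on each coordinate block) and bring Boolean Fourier analysis to bear on the problem.

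Given the neutral social choice function $f$, I would associate to each unordered pair $\{a,b\}\subset[3]$ an auxiliary Boolean-valued function $g_{ab}$ encoding which of $a,b$ the society prefers at each profile, defined so that a non-manipulability condition of $f$ at $x$ forces $g_{ab}(x)$ to be determined only by the voters' binary $a$-vs-$b$ preferences. Thus non-manipulation at $x$ translates into each $g_{ab}$ acting like a Boolean function of the corresponding pairwise comparison coordinates, and joint non-manipulation at almost every $x$ forces the triple $(g_{ab},g_{bc},g_{ac})$ to be Arrow-consistent (i.e.\ to produce a transitive social ranking) on almost every profile.

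Next, I would invoke a quantitative Arrow-type theorem in the spirit of Kalai's Fourier-analytic proof: if the three Boolean aggregators produce a rational (transitive) triple on all but a $\delta$-fraction of profiles, and each is balanced---which is forced by neutrality of $f$, since $\P[f=a]=\P[f=b]=\P[f=c]=1/3$---then the triple must be $O(\delta)$-close to being a common dictator. Contrapositively, $\Dist(f,\DICT)\ge \eps$ implies that at least an $\Omega(\eps^2)$ fraction of profiles produce an irrational triple, i.e.\ a profile at which one of the non-manipulability identifications must fail.

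Finally, a local combinatorial step converts this $\Omega(\eps^2)$-measure set of irrational profiles into manipulation pairs: at each such profile there is a coordinate $i$ and a replacement $y_i\in L_3$ for which $(x,y)$ is a manipulation pair for $f$. Picking the coordinate uniformly and resetting it to a uniformly random preference hits the desired pair with probability at least $1/(6n)$, giving the target $\Omega(\eps^2/n)$ bound. I expect the main obstacle to be the quantitative Arrow step: extracting the sharp $\eps^2$ rate requires a Friedgut--Kalai--Naor-type inverse theorem for Boolean functions whose Fourier mass is concentrated on the first level, together with a careful use of neutrality and balance to rule out constant and essentially-balanced-but-not-dictatorial obstructions that would otherwise degrade the bound.
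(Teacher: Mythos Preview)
This theorem is not proved in the present paper: it is quoted as the main result of \cite{FrKaNi:08} and serves only as background and motivation. There is therefore no ``paper's own proof'' to compare your attempt against. Your sketch is, in broad strokes, a faithful outline of the original Friedgut--Kalai--Nisan argument: encode each ranking by its three pairwise comparisons, observe that non-manipulability forces the induced pairwise aggregators to satisfy an IIA-type condition, apply a quantitative Arrow theorem in Kalai's Fourier-analytic style, and then convert Arrow violations back into manipulation pairs at the cost of a $1/n$ factor.

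Two remarks are worth making. First, you correctly identify the delicate point: the passage from ``$\eps$-far from a dictator'' to ``an $\Omega(\eps^2)$-fraction of profiles witness an Arrow violation'' is where the exponent $2$ must be earned, and it does not drop out of the na\"ive contrapositive of ``$\delta$-fraction irrational $\Rightarrow$ $O(\delta)$-close to a dictator''. Getting the quadratic rate is exactly where the FKN-type structure theorem (low-degree Fourier concentration forcing closeness to a dictator) does real work, and your proposal would need to spell this out carefully rather than treat it as a black box.

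Second, it is worth contrasting your approach with the techniques the present paper actually develops for its own results (the $q\ge 4$ case). The authors explicitly abandon the harmonic-analytic route of \cite{FrKaNi:08} in favor of a purely geometric canonical-path argument: they lower-bound certain pairwise boundaries, thread ``manipulation paths'' between edges on two different boundaries, and show that each such path must pass near a manipulation point. This method sidesteps Fourier analysis entirely and is what allows the extension to general $q$; the Fourier approach you outline, by contrast, leans heavily on the coincidence that for $q=3$ a ranking is (up to transitivity) a triple of bits.
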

Choosing $X$, $Y$ randomly as in Theorem~\ref{thm:FrKaNi}, the result
of~\cite{FrKaNi:08} implies that a manipulation pair is obtained with
non-negligible probability (at most polynomially small in $n$), and
thus a manipulation pair can be found efficiently as long as $f$ can be efficiently evaluated.
Note however that the computational problem discussed above is different from the problem considered in previous
work~\cite{BarthOrline:91,ConitzerS03b,ElkindL05,ProcacciaR06,ConitzerS06},
where the complexity studied was that of finding a beneficial
manipulation for a specific voter, given the declared preferences of all other
voters -- since~\cite{FrKaNi:08} considers only three alternatives, a
voter with access to the social choice function can easily try all
permutations of the alternatives to find a manipulation.

Corollary~\ref{cor:NeutralBound} and
Corollary~\ref{cor:refNeutralBound}, which extend the result
of~\cite{FrKaNi:08} to the case of $4$ or more alternatives, are thus
more relevant with respect to the hardness of finding a
manipulation. They imply that in the case were votes are cast
uniformly at random, a random change of preference for a random voter
will yield a beneficial manipulation with non-negligible
probability--at most polynomially small in $q$ and $n$ by
Corollary~\ref{cor:refNeutralBound}.  Thus in the setup
of~\cite{BarthOrline:91,ConitzerS03b,ElkindL05,ProcacciaR06,ConitzerS06},
with positive probability, a single voter with black-box access to $f$
can efficiently manipulate.  This implies that approach of masking
manipulations behind computational hardness cannot hide manipulations
completely.

\medskip We note that there are other (independent) extensions
of~\cite{FrKaNi:08} for more candidates. Xia and
Conitzer~\cite{Xia-Conitzer} applied the proof strategy of~\cite{FrKaNi:08} to show that for some social choice
functions with $n$ voters and a fixed number $m$ of alternatives,
starting with a uniformly random voting profile and then randomly
resetting the ranking of one of the voters yields a manipulation pair
with probability $\Omega(1/n)$.  Their proof requires a number of properties of the social choice
functions including anonymity (the social choice outcome depends only on the number of times 
each order was chosen), homogeneity (if each vote is replaced by t identical votes the
outcome remains the same), canceling out (this condition related to neutrality - it says that one can
cancel any subset of the votes which contains each order exactly once). Most importantly the results of 
Xia and Conitzer require that certain outcomes are robust (will not change if a small linear fraction of the voters
cast a specific order) and the result 
does not give bounds on the frequency of manipulations in terms of $m$, the
number of alternatives. The later point implies that the results do not have implications for the
hardness of finding a manipulation in the setup
of~\cite{BarthOrline:91,ConitzerS03b,ElkindL05,ProcacciaR06,ConitzerS06}.

We further note that Dobzinski and Procaccia
\cite{Procaccia-Dobzinski} established an
analogous result for the case of two voters and any number of
candidates, under a comparably weak assumption on the voting rule.


\remove{
  \paragraph{Comparing our result to that
    of~\cite{FrKaNi:08}.} Theorem~\ref{thm:refNeutralBound} considers
  the case of $4$ and more alternatives, compared to $3$ alternatives
  considered in~\cite{FrKaNi:08}. The two results are, however,
  difficult to compare: the result of~\cite{FrKaNi:08} counts the number
  of manipulation pairs $(x,y)$, where $x$ is a manipulation point, and
  $y$ is the voting vector obtained from $x$ after one of the voters
  changed her vote to gain a more favorable outcome, while our result is
  stated in terms of the number of manipulations alone. Our proof
  actually shows a lower bound XXXIX

  restrict
  attention to pairs $(x,y)$ where $y$ is obtained from $x$ by permuting
  $4$ adjacent values. Note that our results provide a deterministic
  algorithm for manipulation for a single voter which works with good
  probability. The algorithm works by exhaustively trying all voting
  profiles which differ by permuting at most $4$ values.

\item Since the two results are for different values of $q$
  it is hard to compare how tight are they. However, as is often the
  case when using canonical paths we believe that our results are not
  tight - neither in terms of the power of $n$ nor in terms of the
  power of $q$.

\end{itemize}
}

\subsection{Techniques}

The result of~\cite{FrKaNi:08} are obtained by mixing combinatorial
techniques with discrete harmonic analysis. In contrast, our techniques are purely geometric and combinatorial.
In particular, we apply a variant of the
a canonical path method to prove isoperimetric bounds of "second order".
These allow to establish the existence of a large interface where $3$
bodies touch. \question{4 bodies?}
As far as we know, our result is the first one to establish such a bound in any context.

\paragraph{The canonical path method.} Before describing our
techniques, we briefly recall the canonical path
method~\cite{JerrumSinclair:90}. Given a graph $G$ and a subset $A$ of its vertices, a general
approach to proving a lower bound on the 'surface area' of
$A$---namely the number of vertices in $A$ that are attached by an
edge to a vertex outside of $A$---is as follows: for each pair $x,y$
of vertices in $G$ such that $x\in A$ and $y\not \in A$, determine a
path in $G$ between them, called the canonical path between $x$ and
$y$. Since $x$ is in $A$ and $y$ is not, there is at least one surface
vertex on each canonical path. So if one manages to prove that each
surface vertex lies on at most $r$ canonical paths, it immediately
follows that the surface of $A$ contains at least $\frac{|A|\cdot|\bar
  A|}{r}$ vertices, giving the required lower bound on the surface
area of $A$.

\paragraph{Manipulation paths.} Think of the graph $G$ having the set
$L_q^n$ of all ranking profiles as the vertex set, where the pair
$(x,y)$ is an edge if $x$ and $y$ differ on at most one coordinate. A
social choice function $f\colon L_q^n \rightarrow [q]$ naturally
partitions the vertices of $G$ into $q$ subsets. Our main interest is not
in the surface area of these subsets, however, but in the number of
manipulation points.


Our approach in the proof of Theorem~\ref{thm:neutralBound} is
therefore the following: we consider four subsets
$\inverse{A}$, $\inverse{B}$, $\inverse{C}$ and $\inverse{D}$, where the outcome is $A,B,C$ and $D$ respectively.
 We first use elementary methods to show that many edges in our graph lie
on the interface between $\inverse{A}$ and $\inverse{B}$, namely have
one vertex from each of the subsets. Similarly, many edges must lie on
the interface between $\inverse{C}$ and $\inverse{D}$.

We then define a so called \emph{manipulation path} for each pair of
edges consisting of one edge on the interface between $\inverse{A}$
and $\inverse{B}$, and one on the interface between $\inverse{C}$ and
$\inverse{D}$. The path (of edges) has the property that it either stays in one interface or the other.
If a path "transitions" from the interface between $\inverse{A}$
and $\inverse{B}$ and the interface between $\inverse{C}$ and $\inverse{D}$ then around the transition point
the function must obtain at least $3$ values. This realization allows us to apply the original
Gibbard-Satterthwaite theorem and associate a manipulation point with the path. Much of the work is then devoted to bounding the number of paths that can correspond to each manipulation point.


\paragraph{A refined geometry.}  To obtain the improved parameters of
Theorem~\ref{thm:refNeutralBound} we use a proof scheme similar to
that of Theorem~\ref{thm:neutralBound}, however we use an underlying
graph with a different edge structure. Instead of connecting every
pair $x,y\in L_q^n$ of ranking profiles that differ in just one
coordinate, we connect $x$ and $y$ only if in the coordinate $i$ in
which they differ, $y_i$ can be obtained from $x_i$ by a single
transposition. In the case where $n=1$  this is the graph that's studied in the analysis of the adjacent transposition
card shuffling~\cite{Aldous:83,Wilson:04}. The proof of the refined result requires to show that geometric and combinatorial quantities such as boundaries and manipulation points are roughly the same in the refined graph as in the original graph on $L_q^n$. This proof requires the development of a number of techniques, in particular the study of canonical paths under group actions.


\subsection{Organization of the paper}

In Section~\ref{sec:notation} we set some notations, definitions, and
some general observations. We prove Theorem~\ref{thm:neutralBound} in
Sections~\ref{sec:boundaries}, \ref{sec:first-constr-manip} and
\ref{sec:manip-points-first}. Theorem~\ref{thm:refNeutralBound} is
proved in Sections~\ref{sec:canon-paths-group},
\ref{sec:refined-boundaries}, and \ref{sec:refin-constr-manip}.
Finally, some open problems appear in
Section~\ref{sec:disc-open-problem}.

\section{Setup and notation}\label{sec:notation}

\paragraph{Rankings.} We denote by $L_q$ the set of rankings of $q$
alternatives. An element $x \in L_q$ is a permutation of the set
$[q]$. The elements ranked at top by $x$ is $x(1)$, the second is
$x(2)$ etc. Given another element $y \in L_q$, their composition $y x$
is the ranking where the element ranked at the top is $y(x(1))$ etc.

More generally we will also sometimes use $L_S$ to denote the set of rankings of a set $S$.
\todo{Rankings,total orderings, permutations. Fix notation!}

\begin{definition}[neutral social choice functions]\label{def:neutrality}
  Let $f \colon L_q^n \rightarrow [q]$ be a social choice function. We
  say that $f$ is {\em neutral} if for every $x\in L_q^n$ and every
  $y\in L_q$, $y(f(x))=f(yx_1,\ldots,yx_n)$. Informally $f$ is neutral
  if the names of the alternatives do not matter when applying $f$.
\end{definition}

\paragraph{Influences and Variance.} We call a function $f \colon
L_q^n \rightarrow [q]$ a \emph{social choice
  function} and define the \emph{influence} of the $i$:th coordinate on $f$ as
$\Inf_i(f) = \P(f(X) \neq f(X^{(i)}))$ where $X$ is uniform on $L_q^n$ and
$X^{(i)}$ is obtained from $X$ by re-randomizing the $i$:th coordinate.
Similarly we define the influence of the $i$:th coordinate w.r.t. to a single
alternative $a \in [q]$ or a pair of alternatives $a,b \in [q]$ as
\[
\Inf_i^{a}(f) = \P(f(X) = a, f(X^{(i)}) \neq a)
\]
and
\[
\Inf_i^{a,b}(f) = \P(f(X) = a, f(X^{(i)})=b)
\]
respectively.

We also define the total influence of $f$ as $\Inf (f) = \sum_{i=1}^n
\Inf_i(f)$. The following relationship is obvious, \begin{proposition}
  For any $f \colon L_q^n \rightarrow [q]$,
  \begin{equation}
    \Inf_i (f)
    =
    \sum_{a=1}^q \Inf_i^a (f)
    =
    \sum_{a,b \in [q] : a \neq b} \Inf_i^{a,b} (f)
  \end{equation}
\end{proposition}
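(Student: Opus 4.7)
The plan is to observe that all three expressions count the probability of the single event $E_i = \{f(X) \neq f(X^{(i)})\}$, merely partitioned in two different ways, and then to invoke finite additivity of probability on the appropriate disjoint unions.

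First I would establish the left equality. The event $E_i$ decomposes as the disjoint union
\[
E_i = \bigsqcup_{a=1}^{q} \bigl\{ f(X) = a,\ f(X^{(i)}) \neq a \bigr\},
\]
since the value $f(X)$ takes exactly one value in $[q]$, and on $E_i$ this value differs from $f(X^{(i)})$. The summand on the right-hand side is by definition $\Inf_i^a(f)$, so additivity of probability gives $\Inf_i(f) = \sum_{a=1}^q \Inf_i^a(f)$.

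Next I would refine each term $\Inf_i^a(f)$ by further partitioning on the value of $f(X^{(i)})$. Fixing $a$, we have
\[
\bigl\{ f(X) = a,\ f(X^{(i)}) \neq a \bigr\} = \bigsqcup_{b \in [q] \setminus \{a\}} \bigl\{ f(X) = a,\ f(X^{(i)}) = b \bigr\},
\]
and the $b$-th summand is exactly $\Inf_i^{a,b}(f)$. Summing over $a$ and over $b \neq a$ and combining with the first equality yields $\Inf_i(f) = \sum_{a \neq b} \Inf_i^{a,b}(f)$, as required.

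There is no real obstacle here: the proposition is purely a bookkeeping statement about disjoint decompositions of a single event, and the only thing to check is that the $q$ events indexed by $a$ (respectively the $q(q-1)$ events indexed by ordered pairs $(a,b)$ with $a \neq b$) are pairwise disjoint and exhaust $E_i$, which is immediate from the fact that $f(X)$ and $f(X^{(i)})$ are each $[q]$-valued random variables.
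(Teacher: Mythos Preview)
Your argument is correct; the paper itself does not give a proof but simply declares the relationship ``obvious,'' and your disjoint-decomposition bookkeeping is exactly the obvious verification the authors have in mind.
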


The following standard proposition bounds the total influence with
respect to a given candidate from below by the variance with respect to that
candidate.
\note{Previously we referred to Boolean Fourier analysis, but this is
  not a Boolean function.}
\begin{proposition}
  \label{prop:sumInfVarBound}
  For any $f \colon L_q^n \rightarrow [q]$ and $a \in [q]$,
  \begin{equation}
    \sum_{i=1}^n \Inf_i^a (f)
    \ge
    \Var [ 1_{\{f(X) = a\}} ]
  \end{equation}
  where $X \in L_q^n$ is uniformly selected.
\end{proposition}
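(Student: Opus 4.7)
The plan is to reduce this to the standard Efron--Stein / Poincar\'e inequality on the product space $L_q^n$ with the uniform measure. Introduce the Boolean indicator $g\colon L_q^n\to\{0,1\}$ defined by $g(x)=\mathbf{1}_{\{f(x)=a\}}$. Then $\Var[\mathbf{1}_{\{f(X)=a\}}]=\Var(g)=p(1-p)$, where $p=\P(f(X)=a)$, and everything on the right-hand side only depends on $g$.

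Next, I would rewrite each $\Inf_i^a(f)$ as a symmetric quantity in terms of $g$. Note that $\Inf_i^a(f)=\P\bigl(g(X)=1,\ g(X^{(i)})=0\bigr)$. Since the pair $(X,X^{(i)})$ has a distribution that is invariant under swapping its two coordinates (both marginals are uniform on $L_q^n$, and the joint law is symmetric because re-randomizing the $i$-th coordinate and then taking $X$ instead of $X^{(i)}$ yields the same joint distribution), we get
\begin{equation*}
\Inf_i^a(f)=\P\bigl(g(X)=1,\ g(X^{(i)})=0\bigr)=\tfrac{1}{2}\P\bigl(g(X)\neq g(X^{(i)})\bigr)=\tfrac{1}{2}\E\bigl[(g(X)-g(X^{(i)}))^2\bigr].
\end{equation*}
Summing over $i$, the proposition reduces to the inequality
\begin{equation*}
\tfrac{1}{2}\sum_{i=1}^n\E\bigl[(g(X)-g(X^{(i)}))^2\bigr]\;\ge\;\Var(g).
\end{equation*}

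This last inequality is precisely the Efron--Stein inequality applied to the function $g$ on the product probability space $L_q^n$ (each coordinate independent and uniform on $L_q$). I would invoke it directly, or else give the one-line martingale proof: write the Doob decomposition $g-\E g=\sum_{i=1}^n D_i$, where $D_i=\E[g\mid X_1,\dots,X_i]-\E[g\mid X_1,\dots,X_{i-1}]$, so that $\Var(g)=\sum_i \E[D_i^2]$ by orthogonality of martingale differences. For each $i$, conditioning on $X_{\neq i}$ and using that re-randomizing $X_i$ gives an independent copy, one checks $\E[D_i^2]\le \tfrac{1}{2}\E\bigl[(g(X)-g(X^{(i)}))^2\bigr]$, which when summed yields the desired bound.

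There is no real obstacle here; the only thing to double-check is the symmetry step, which works because each coordinate is i.i.d.\ uniform on $L_q$ under the measure used to define influences. This is enough to conclude $\sum_i \Inf_i^a(f)\ge\Var[\mathbf{1}_{\{f(X)=a\}}]$.
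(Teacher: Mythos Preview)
Your proof is correct. The paper, however, takes a slightly more elementary route that avoids invoking Efron--Stein as a black box. It sets $g=\mathbf{1}_{\{f=a\}}$ as you do, then uses the Boolean identity $2\Var g(X)=\P(g(X)\neq g(Y))$ for independent uniform $X,Y$, builds a path $X=X^{(0)},X^{(1)},\dots,X^{(n)}=Y$ by re-randomizing one coordinate at a time, and applies a union bound: if $g(X)\neq g(Y)$ then $g$ changes on some step, and $\P(g(X^{(i-1)})\neq g(X^{(i)}))=2\Inf_i^a(f)$. Your approach via Efron--Stein (or the Doob martingale decomposition) is essentially the same tensorization-of-variance idea dressed in more standard clothing; it has the advantage of applying to arbitrary real-valued $g$, whereas the paper's argument exploits that $g$ is $\{0,1\}$-valued to get the clean identity $2\Var g=\P(g(X)\neq g(Y))$. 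Either way the proof is a couple of lines.
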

\begin{proof}
  Create a random walk
  $X=X^{(0)}, \ldots, X^{(n)}=Y$
  from $X$ by re-randomizing the $i$:th coordinate in the $i$:th step,
  i.e. for $i \in [n]$,
  $X^{(i)} \in L_q^n$ is obtained by re-randomizing the $i$:th coordinate
  of $X^{(i-1)}$.
  Letting $g(x)=1_{\{f(x) = a\}}$ and using that $X,Y$ are independent
  and that if $g(X) \neq g(Y)$
  then the value of $g$ has to change at some edge on the path we have
  \begin{eqnarray*}
    2 \Var [ 1_{\{f(X) = a\}} ]
    &=&
    2 \Var g(X)
    =
    \P(g(X) \neq g(Y))
    \le
    \\
    &\le&
    \P(\cup_{i \in [n]} \{g(X^{(i-1)}) \neq g(X^{(i)})\})
    \le
    \sum_{i=1}^n 2 \Inf_i^a(f)
  \end{eqnarray*}
\end{proof}

Further, if a function is far from all constants all such variances cannot be small:
\begin{lemma}
  \label{lem:constDist}
  For any $f \colon L_q^n \rightarrow [q]$,
  \begin{equation}
    \Dist(f, \CONST)
    \le
    \frac{q}{2} \sum_{a=1}^q
    \Var [ 1_{\{f(X) = a\}} ]
  \end{equation}
\end{lemma}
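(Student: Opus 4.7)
The plan is to reduce the lemma to a short identity involving the marginal probabilities of $f$. Let $p_a := \P(f(X) = a)$ for $a \in [q]$, so that $\sum_a p_a = 1$.

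First, I identify $\Dist(f,\CONST)$ exactly. For each $a\in[q]$ the constant function $g\equiv a$ satisfies $\Dist(f,g) = \P(f(X)\neq a) = 1 - p_a$, and every constant function is of this form, so
$$
\Dist(f,\CONST) \;=\; 1 - \max_{a\in[q]} p_a \;=\; 1 - M,
$$
where $M := \max_a p_a$. Next, each indicator $1_{\{f(X)=a\}}$ is Bernoulli with mean $p_a$, hence $\Var[1_{\{f(X)=a\}}] = p_a(1-p_a)$, and summing,
$$
\sum_{a=1}^q \Var\bigl[1_{\{f(X)=a\}}\bigr] \;=\; \sum_{a=1}^q p_a - \sum_{a=1}^q p_a^2 \;=\; 1 - \sum_{a=1}^q p_a^2.
$$

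The final step is the bound $\sum_a p_a^2 \le M \sum_a p_a = M$, which gives $\sum_a \Var[1_{\{f(X)=a\}}] \ge 1 - M = \Dist(f,\CONST)$. For $q\ge 2$ the prefactor $q/2\ge 1$ then yields the desired inequality; for $q=1$, $f$ is already constant and both sides are $0$.

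I do not expect any real obstacle: the lemma is a short book-keeping step, and in fact the argument above shows that the stronger inequality $\Dist(f,\CONST)\le \sum_{a=1}^q \Var[1_{\{f(X)=a\}}]$ holds, so the factor $q/2$ in the statement is slack and will not need to be tightened. The only point worth double-checking when writing the proof is that the minimizing constant in $\Dist(f,\CONST)$ is the modal output of $f$, which is immediate from $\Dist(f, a) = 1 - p_a$.
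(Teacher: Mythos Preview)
Your proof is correct, and in fact slightly sharper and more direct than the paper's. Both arguments begin by writing $\Dist(f,\CONST)=1-M$ with $M=\max_a p_a$ and $\sum_a \Var[1_{\{f=a\}}]=1-\sum_a p_a^2$, but they diverge at the key inequality. The paper uses $M\ge 1/q$ to obtain $1-M\le qM(1-M)=\tfrac{q}{2}\bigl(1-M^2-(1-M)^2\bigr)$, and then invokes $\sum_{a\ge 2} p_a^2 \le (1-M)^2$ to bound this by $\tfrac{q}{2}(1-\sum_a p_a^2)$; the factor $q/2$ thus enters genuinely through the step $1-M\le qM(1-M)$. You instead use the one-line bound $\sum_a p_a^2 \le M\sum_a p_a = M$, which yields the stronger inequality $\Dist(f,\CONST)\le \sum_a \Var[1_{\{f=a\}}]$ without any $q$-dependence, and then observe that $q/2\ge 1$ makes the stated bound follow a fortiori. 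Your route is shorter and shows explicitly that the $q/2$ is slack; the paper's route, while also elementary, happens to manufacture the $q/2$ along the way rather than append it at the end.
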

\begin{proof}
  For $a \in [q]$, let $\mu_a = \P(f(X)=a)$
  and assume w.l.o.g. that $\mu_1 \ge \mu_2 \ge \ldots \ge \mu_q$.
  Then,
  \begin{align*}
    \Dist(f, \CONST)
    & =
    (1-\mu_1)
    \le
    q \mu_1 (1-\mu_1)
    =
    \frac{q}{2} \left(1 - \mu_1^2 - (1-\mu_1)^2\right)
    \le
    \\
    & \le
    \frac{q}{2} \left(1- \sum_{a=1}^q \mu_a^2\right)
    =
    \frac{q}{2} \sum_{a=1}^q \mu_a - \mu_a^2
    =
    \frac{q}{2} \sum_{a=1}^q \Var [ 1_{\{f(X) = a\}} ]
  \end{align*}
\end{proof}

\section{Boundaries}\label{sec:boundaries}

\begin{lemma} \label{lem:boundaries1}
  Fix $q \ge 3$ and
  $f \colon L_q^n \rightarrow [q]$
  satisfying $\Dist(f, \NONMANIP) \ge \epsilon$.
  \question{Does it hold if the distance to \emph{non-manipulable} functions is $ \ge \eps$?}
  Then there exist distinct $i,j \in [n]$
  and $\{a,b\},\{c,d\} \subseteq [q]$ such that $c \notin \{a,b\}$ and
  \begin{equation}
    \Inf_i^{a,b} (f) \ge \frac{2\eps}{n q^2 (q-1)}
    \text{ and }
    \Inf_j^{c,d} (f) \ge \frac{2\eps}{n q^2 (q-1)}
  \end{equation}
\end{lemma}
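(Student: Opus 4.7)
The plan is to set $\alpha=\frac{2\eps}{nq^2(q-1)}$ and to call $(i,\{a,b\})$ \emph{heavy} when $\Inf_i^{a,b}(f)\ge\alpha$. Since the joint distribution of $(X,X^{(i)})$ is exchangeable in its two components, $\Inf_i^{a,b}(f)=\Inf_i^{b,a}(f)$, so heaviness depends only on the unordered pair $\{a,b\}$. Write $A$ for the set of heavy pairs, $C(A)$ for the coordinates they use, and $P(A)$ for the pair-types they use. Once we exhibit $(i,P_1),(j,P_2)\in A$ with both $i\ne j$ and $P_1\ne P_2$, the lemma follows: pick $c\in P_2\setminus P_1$ (nonempty since $P_1,P_2$ are distinct $2$-element sets), set $\{a,b\}=P_1$ and let $d$ be the remaining element of $P_2$, and invoke the symmetry above to get $\Inf_j^{c,d}(f)\ge\alpha$ for this labeling.

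The hypothesis $\Dist(f,\NONMANIP)\ge\eps$ separately implies $\Dist(f,\CONST)\ge\eps$, $\Dist(f,\DICT)\ge\eps$, and $\P(f(X)\notin\{a',b'\})\ge\eps$ for every pair $\{a',b'\}$, because $\CONST$, $\DICT$, and the class of $2$-valued functions all sit inside $\NONMANIP$. From the CONST bound, Lemma~\ref{lem:constDist} with Proposition~\ref{prop:sumInfVarBound} gives $\sum_i\Inf_i(f)\ge 2\eps/q$; averaging over the $nq(q-1)/2$ unordered triples $(i,\{a,b\})$ then produces a heavy pair, so $A\ne\emptyset$. For $|C(A)|\ge 2$: if every heavy pair lived on some $i^*$, then $\Inf_j^{a,b}(f)<\alpha$ for all $j\ne i^*$, giving $\sum_{j\ne i^*}\Inf_j(f)<(n-1)q(q-1)\alpha=\frac{2(n-1)\eps}{nq}<\frac{2\eps}{q}$; on the other hand, applying Lemma~\ref{lem:constDist} to each conditional restriction $f_z:=f|_{X_{i^*}=z}$ and Proposition~\ref{prop:sumInfVarBound} to the remaining $n-1$ coordinates, then integrating in $z$, yields $\Dist(f,\DICT_{i^*})\le\tfrac{q}{2}\sum_{j\ne i^*}\Inf_j(f)$, which together with $\Dist(f,\DICT)\ge\eps$ forces $\sum_{j\ne i^*}\Inf_j(f)\ge 2\eps/q$, a contradiction. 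For $|P(A)|\ge 2$: if every heavy pair used some single $\{a^*,b^*\}$, then $\Inf_i^{c,d}(f)<\alpha$ for every $i$ and every $c\notin\{a^*,b^*\}$, so $\sum_i\Inf_i^c(f)<n(q-1)\alpha=2\eps/q^2$, and Proposition~\ref{prop:sumInfVarBound} gives $\mu_c(1-\mu_c)<2\eps/q^2$ for $\mu_c:=\P(f(X)=c)$. Either some $\mu_{c^*}>\tfrac12$, in which case $\Dist(f,\CONST)\le 1-\mu_{c^*}<4\eps/q^2<\eps$; or every such $\mu_c<4\eps/q^2$, in which case $\P(f\notin\{a^*,b^*\})<\frac{4(q-2)\eps}{q^2}\le\eps/2$ (the maximum of $4(q-2)/q^2$ for $q\ge 3$ is $1/2$, attained at $q=4$). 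Either way $\Dist(f,\NONMANIP)<\eps$, a contradiction.

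Finally, $|C(A)|\ge 2$ and $|P(A)|\ge 2$ together imply the existence of the desired pair. Suppose otherwise: every two heavy pairs on distinct coordinates share their pair-type. Fix $(i_1,P),(i_2,P)\in A$ with $i_1\ne i_2$; then for any other heavy $(i,P')$, comparison with whichever of $i_1,i_2$ differs from $i$ forces $P'=P$, so $|P(A)|=1$, contradicting what was just established. Therefore two heavy pairs with distinct coordinates and distinct pair-types exist, and the relabeling described in the first paragraph completes the proof.

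The only step that is not straightforward pigeonholing is the auxiliary estimate $\Dist(f,\DICT_{i^*})\le\tfrac{q}{2}\sum_{j\ne i^*}\Inf_j(f)$ used in the $|C(A)|\ge 2$ argument: it is not stated in the paper, but follows in a few lines by applying Lemma~\ref{lem:constDist} pointwise to the conditional slices $f_z$ and invoking Proposition~\ref{prop:sumInfVarBound} for each slice before integrating in $z$. Everything else is careful bookkeeping, with the only mildly delicate arithmetic being the inequality $4(q-2)/q^2\le 1/2$ for $q\ge 3$.
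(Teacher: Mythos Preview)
Your proof is correct and follows essentially the same route as the paper's: define the heavy set $A$ via the threshold $\alpha$, show the heavy coordinates satisfy $|C(A)|\ge 2$ using the slice argument (your auxiliary estimate $\Dist(f,\DICT_{i^*})\le \tfrac{q}{2}\sum_{j\ne i^*}\Inf_j(f)$ is exactly what the paper derives inline), show the heavy pair-types satisfy $|P(A)|\ge 2$, and then finish with the same small combinatorial step.

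The only place you diverge is in the $|P(A)|\ge 2$ argument. The paper proves the slightly stronger statement that for \emph{every} $\{a,b\}$ there is a heavy pair of a different type: it simply picks $c\notin\{a,b\}$ with $\mu_c\in[\eps/q,\,1-\eps/q]$ (both bounds following directly from $\Dist(f,\NONMANIP)\ge\eps$), so that $\Var[1_{f=c}]\ge \eps(q-1)/q^2$, and a single pigeonhole over the $n(q-1)$ terms $\Inf_i^{c,d}$ gives a heavy $(i,\{c,d\})$. Your version instead bounds $\mu_c(1-\mu_c)$ for all $c\notin\{a^*,b^*\}$ and branches on whether some such $\mu_c>1/2$; this works (the arithmetic $4(q-2)/q^2\le 1/2$ for $q\ge 3$ is right), but is a little more circuitous than the paper's direct choice of a single good $c$.
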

\begin{proof} \note{Fixed plenty of bugs here.}
  For $a \neq b$ let
  $
  A^{a,b} =
  \left\{
    i\in [n] \mid \Inf_i^{a,b} \ge \frac{2\eps}{n q^2 (q-1)}
  \right\}
  $.

  We first claim that for all $\{a,b\}$ there exists $\{c,d\}$ such that $\{c,d\}
  \neq \{a,b\}$ and $A^{c,d} \neq \emptyset$.
  Note that $f$ being $\eps$-far from taking two values asserts that we can find
  a $c \notin \{a,b\}$
  such that $1-\frac{\eps}{q} \ge \P(f(X) = c) \ge \frac{\eps}{q-2} \ge \frac{\eps}{q}$.
  But then, by Proposition~\ref{prop:sumInfVarBound},
  \begin{equation*}
    \sum_{d \neq c} \sum_{i=1}^n \Inf_i^{c,d} (f)
    =
    \sum_{i=1}^n \Inf_i^c (f)
    \ge
    \Var [ 1_{\{f(X) = c\}} ]
    \ge
    \frac{\eps(1-\eps/q)}{q}
    \ge
    \frac{\eps (q-1)}{q^2}
  \end{equation*}
  hence there must exist some $d \neq c$ and $i \in [n]$ such that
  $\Inf_i^{c,d} \ge \frac{\eps}{n q^2} \ge \frac{2\eps}{n q^2(q-1)}$,
  and thus $A^{c,d} \neq \emptyset$.

  We next claim that
  \begin{equation}
    \label{eq:unionContains}
    |\cup_{a,b} A^{a,b}| \ge 2
  \end{equation}
  To see this, assume the contrary, i.e.
  $\cup_{a,b} A^{a,b} \subseteq \{i\}$ for some $i \in [n]$.
  Then for all $j \neq i$ it holds that
  \begin{equation}
    \label{eq:nonInflInflBound}
    \Inf_j (f) = \sum_{c,d} \Inf_j^{c,d}(f) < \frac{q(q-1)}{2}
    \frac{2\eps}{n q^2 (q-1)} = \frac{\eps}{n q}
  \end{equation}
  For $\sigma \in L_q$,
  let $f_\sigma(x) = f(x_1, \ldots, x_{i-1}, \sigma, x_{i+1}, \ldots, x_n)$
  and note that for $j \neq i$,
  \begin{equation}
    \Inf_j(f) = \frac{1}{q!} \sum_{\sigma \in L_q} \Inf_j(f_\sigma)
  \end{equation}
  while $\Inf_i(f_\sigma) = 0$.
  Hence, by \eqref{eq:nonInflInflBound}, we have
  \begin{equation*}
    \eps
    >
    q \sum_{j \neq i} \Inf_j (f)
    =
    \frac{q}{q!} \sum_{j=1}^n \sum_{\sigma } \Inf_j(f_\sigma)
    \ge
    \frac{2}{q!} \sum_{\sigma} \Dist(f_\sigma, \CONST)
    =
    2 \Dist(f,\DICT_i)
  \end{equation*}
  where the second inequality follows from Lemma~\ref{lem:constDist}
  and Proposition~\ref{prop:sumInfVarBound}.
  But this means that $f$ is $\eps/2$-close to a dictator, contradicting
  the assumption that $\Dist(f,\NONMANIP) \ge \eps$.

  Hence \eqref{eq:unionContains} holds.
  Therefore we can either find $i \neq j$
  and $\{a,b\} \neq \{c,d\}$ such that $i \in A^{a,b}$ and $j \in A^{c,d}$
  which proves the theorem,
  or we must have $|A^{a,b}| \ge 2$ for some $\{a,b\}$ while
  $A^{c,d} = \emptyset$ for any $\{c,d\} \neq \{a,b\}$. However, this
  contradicts the first claim in the proof. The result follows.
\end{proof}

As a simple corollary we have that assuming neutrality and $q\ge 4$ we may
assume $a,b,c,d$ are all distinct,
\begin{corollary}
  \label{cor:neutralPairs}
  Fix $q \ge 4$ and
  suppose $f \colon L_q^n \rightarrow [q]$ is neutral and
  satisfies $\Dist(f, \DICT) \ge \epsilon$.
  Then there exist distinct $i,j \in [n]$
  and distinct $a,b,c,d \in [q]$ such that
  \begin{equation}
    \Inf_i^{a,b} (f) \ge \frac{\eps}{n q^2 (q-1)}
    \text{ and }
    \Inf_j^{c,d} (f) \ge \frac{\eps}{n q^2 (q-1)}
  \end{equation}
\end{corollary}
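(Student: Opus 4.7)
The plan is to deduce the corollary from Lemma~\ref{lem:boundaries1} in two steps. First, neutrality lets me pass from the hypothesis $\Dist(f,\DICT)\ge\eps$ to the lemma's hypothesis $\Dist(f,\NONMANIP)\ge\eps/2$. Second, neutrality lets me upgrade the pair $\{c,d\}$ returned by the lemma (where only $c\notin\{a,b\}$ is guaranteed) to one that is disjoint from $\{a,b\}$, which is possible because $q\ge 4$.

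For the first step I would first note that neutrality forces $\P[f(X)=a]=1/q$ for every $a\in[q]$: changing variables $X\mapsto zX$ preserves the uniform measure on $L_q^n$, and neutrality gives $f(zX)=z(f(X))$, so $\P[f(X)=a]=\P[f(X)=z(a)]$ for every permutation $z\in L_q$. Consequently any $g\colon L_q^n\to[q]$ with $|\mathrm{Image}(g)|\le 2$ disagrees with $f$ whenever $f(X)$ lands in the remaining $q-2$ values, giving $\Dist(f,g)\ge (q-2)/q\ge 1/2$ when $q\ge 4$. Combined with $\Dist(f,\DICT)\ge\eps$, this yields $\Dist(f,\NONMANIP)\ge\min(\eps,1/2)\ge\eps/2$, so applying Lemma~\ref{lem:boundaries1} with $\eps/2$ in place of $\eps$ supplies distinct $i,j\in[n]$ and pairs $\{a,b\},\{c,d\}$ with $c\notin\{a,b\}$ such that both $\Inf_i^{a,b}(f)$ and $\Inf_j^{c,d}(f)$ are at least $\frac{2(\eps/2)}{nq^2(q-1)}=\frac{\eps}{nq^2(q-1)}$.

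For the second step I would show that $\Inf_k^{u,v}(f)$ depends only on $k$, not on the ordered pair $(u,v)$ with $u\ne v$. For any $z\in L_q$ the change of variable $X\mapsto z^{-1}X$ in the definition of $\Inf_k^{u,v}(f)$, together with neutrality, yields $\Inf_k^{u,v}(f)=\Inf_k^{z(u),z(v)}(f)$; the only thing requiring care is that re-randomizing coordinate $i$ commutes in distribution with the coordinate-wise action of $z$, which holds because left-multiplying an independent uniform element of $L_q$ by $z$ leaves it uniform. Since $q\ge 4$ I can select $d^*\in[q]\setminus\{a,b,c\}$ and apply this symmetry to a permutation fixing $c$ and sending $d$ to $d^*$, getting $\Inf_j^{c,d^*}(f)=\Inf_j^{c,d}(f)\ge\frac{\eps}{nq^2(q-1)}$. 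Relabelling $d^*$ as $d$ produces a quadruple $a,b,c,d$ of distinct alternatives with the claimed bounds, and there is no further obstacle beyond this symmetry bookkeeping.
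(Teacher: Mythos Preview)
Your proof is correct and follows essentially the same approach as the paper's: first use neutrality to show $f$ is at least $1-2/q\ge 1/2$ far from two-valued functions so that $\Dist(f,\NONMANIP)\ge\eps/2$ and Lemma~\ref{lem:boundaries1} applies with $\eps/2$, then use neutrality to see that $\Inf_k^{u,v}(f)$ is independent of the pair and hence replace $d$ by some $d^*\notin\{a,b,c\}$. You have simply fleshed out the details that the paper leaves implicit.
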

\begin{proof}
  Neutrality of $f$ implies that $f$ is $1-2/q \ge 1/2$ far from the
  set of functions taking at most $2$ values.
  Since $\eps \le 1$ it follows that
  $\Dist(f, \NONMANIP) \ge \eps/2$
  \note{$\Dist(f, \NONMANIP) \ge \Dist(f, \DICT)$ does not hold.}
  Moreover, by neutrality,
  $\Inf_i^{a,b}$ does not depend on $\{a,b\}$ so
  we can choose $\{a,b\}$ and $\{c,d\}$ non-intersecting.
\end{proof}

\section{First Construction of Manipulation Paths}
\label{sec:first-constr-manip}

Similar to the definition of influence, let us now define $f$'s boundary in the $i$:th direction w.r.t. the alternatives $a,b \in [q]$ as
\[
B_i^{a,b} (f) = \{(x,y) \mid f(x)=a, f(y)=b, \forall j \neq i:x_j=y_j\}
\]

The main idea of the proof is to define a canonical path between every pair of
points on $B_i^{a,b}$ and every pair of points on $B_j^{c,d}$ in a way such that
each canonical path passes through a manipulation point while making sure that
no manipulation point can be passed by too many canonical paths. We call the
paths so constructed manipulation paths.

Let us start with defining the canonical paths in terms of one voter. The main
intuition behind the canonical paths is that in order to remain on $B_i^{a,b}$
we require that we change rankings without changing the relative order of $a$
and $b$.
\todo{The intuition should be better explained.}
Similarly, in order to remain on $B_j^{c,d}$ we require that we change
the ranking without changing the relative order of $c$ and $d$.

We now define the graph that we are working with:

\begin{definition}
  The {\em voting graph} is the graph whose vertex set is $L_q^n$ and whose
  edges are of the form $x,y$ where $x_j=y_j$ for all $j \neq i$ and $x_i \neq
  y_i$.
\end{definition}

We begin our definition of a canonical path by considering the case of one
voter.

\begin{definition} \label{def:sim_canon}
  Fix $q\ge 4$ and
  distinct $a,b,c,d \in [q]$.
  Then {\em the canonical path between $x \in L_q$ and $z \in L_q$}
  is $x, y, z$ where $y$ is obtained from $z$ by swapping $a$ and $b$ if
  necessary in order to assure that $a$ and $b$ are in the same order as in $x$.
  This first step is called a \emph{Type I move} while the second step from $y$
  to $z$ is called a \emph{Type II move}.
\end{definition}
\question{Def. does not depend on c,d. Call i path relative to a?}





Note that Type I moves preserve the order of $a$ and $b$ while Type II moves
preserve the order of $c$ and $d$. We can now define the manipulation paths used
in the first proof. These paths go from points in $B_i^{a,b}$ to $B_j^{c,d}$. To
simplify notation we assume that $i=n-1$ and $j=n$. The path is of length $2 n$
and is defined by first making all type I moves and then making all type II
moves.

\begin{definition} \label{def:canon1}
  Let $f \colon L_q^n \rightarrow [q]$,
  $(x,x') \in B_{n-1}^{a,b}$ and
  $(z,z') \in B_n^{c,d}$,
  for distinct $a,b,c,d \in [q]$.
  Then the canonical path $\Gamma$ between $(x,x')$ and $(z,z')$ is
 \begin{equation*}
    (x,x')=(x^{(0)},x'^{(0)}), \ldots, (x^{(n-2)},x'^{(n-2)}) ,
    (z^{(n-2)},z'^{(n-2)}) , \ldots, (z^{(0)},z'^{(0)}) = (z,z'),
  \end{equation*}
  where only coordinate $k$ is updated at the $k$:th first step and the $k$:th last step, i.e. for all $k$ and
  all $s \neq k$:
  \[
 (x^{(k-1)}_s,
  x'^{(k-1)}_s) =
  (x^{(k)}_s,
  x'^{(k)}_s), \quad
  (z^{(k-1)}_s,z'^{(k-1)}_s) = (z^{(k)}_s,z'^{(k)}_s),
  \]
  and
  \[
 x_k=x_k^{(k-1)} \quad , \quad x_k^{(k)}=z_k^{(k)} \quad , \quad z_k^{(k-1)}=z_k
  \]
  \[
 x'_k=x'^{(k-1)}_k \quad , \quad x'^{(k)}_k=z'^{(k)}_k \quad , \quad z'^{(k-1)}_k=z'_k
  \]
  are the canonical paths in Definition~\ref{def:sim_canon}.

\end{definition}

\section{Manipulation Points and First Proof}
\label{sec:manip-points-first}

\begin{lemma}
  \label{lem:inverseImageBound}
  For any $f \colon L_q^n \rightarrow [q]$,
  distinct $i,j \in [n]$ and
  distinct $a,b,c,d \in [q]$
  there exists a mapping
  $h \colon B_i^{a,b}(f) \times B_j^{c,d}(f) \rightarrow M$
  where
  \[
  M=\{x \in L_q^n \mid f \text{ is manipulable at } x\}
  \]
  such that for any $x \in M$
  \begin{equation}
   |h^{-1} (x) | \le 2n(q!)^{n+4}.
  \end{equation}
\end{lemma}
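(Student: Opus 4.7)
The plan is to define $h$ by tracing the canonical path from Definition~\ref{def:canon1} and extracting a manipulation point at a local transition where the outcome of $f$ moves between the disjoint sets $\{a,b\}$ and $\{c,d\}$; then to bound $|h^{-1}(w)|$ by reconstructing a preimage from $w$ using a limited amount of side information.

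\textbf{Step 1: locate a transition on the path.} Given $((x,x'),(z,z')) \in B_i^{a,b}(f) \times B_j^{c,d}(f)$ with $i = n-1$ and $j = n$, the canonical path is really two parallel walks — one from $x$ to $z$, one from $x'$ to $z'$ — that update one coordinate per step by Type I moves (first $n-2$ steps) followed by Type II moves (last $n-2$ steps). At the starting edge $\{f(x),f(x')\} = \{a,b\}$, and at the ending edge $\{f(z),f(z')\} = \{c,d\}$; these two sets are disjoint. Consequently, scanning the path, there must be a first step $s^*$ at which $f$ takes a value outside $\{a,b\}$ at one of the profiles $u^{(s^*)}, u'^{(s^*)}$. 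I let $k$ be the unique coordinate updated at step $s^*$.

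\textbf{Step 2: extract a manipulation point.} At step $s^*$ the four profiles involved (both walks, before and after the update) differ only in coordinate $k$, and by the choice of $s^*$ the function $f$ takes at least three distinct values among them. Fixing all coordinates except the $k$th to those of $u^{(s^*)}$, the resulting single-voter function $g \colon L_q \to [q]$ takes at least three values. A local variant of the Gibbard--Satterthwaite argument — inspecting the preference orderings placed in coordinate $k$ and pigeon-holing through the $\geq 3$ values — produces a profile $w$ at which $f$ is manipulable in coordinate $k$. A canonical rule (say, lexicographically smallest such $w$) makes $h$ well defined, and I set $h((x,x'),(z,z')) = w$.

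\textbf{Step 3: bound the preimage.} Given $w \in M$, I describe the preimage by specifying $(s^*, k)$ and then the parts of $(x,x',z,z')$ not determined by $w$. The step index $s^*$ ranges over at most $2n-3$ values, and together with coordinate $k$ and a few auxiliary flags this contributes a factor of at most $2n$. Once $s^*$ is fixed, the vertex $u^{(s^*)}$ agrees with $w$ off coordinate $k$; its coordinates $>s^*$ coincide with those of $x$, and its coordinates $<s^*$ coincide with those of $z$ up to a deterministic $a,b$-swap driven by $x_k$. What remains unspecified is: the $s^*$ overwritten coordinates $x_1,\ldots,x_{s^*}$, the $n-s^*$ untouched coordinates $z_{s^*+1},\ldots,z_n$, the single differing coordinate $x'_{n-1}$, the single differing coordinate $z'_n$, and the value $u^{(s^*)}_k$ itself. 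Totalling these, the free data is at most $(q!)^{n+3}$ choices; absorbing a small amount of slack from the canonical choices made in Step~2 gives the claimed bound $(q!)^{n+4}$.

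\textbf{Main obstacle.} The principal difficulty is the bookkeeping in Step~3 — verifying that the reconstruction really loses at most a $(q!)^{n+4}$ factor, with no hidden multiplicity. A secondary subtlety is making Step~2 fully deterministic: one must nail down a canonical rule for choosing $s^*$, the side of the path, the coordinate $k$, and the manipulation point $w$, so that the inversion in Step~3 actually succeeds. If the canonical choices are not tight enough the proof still goes through, but with a worse polynomial constant in the final bound.
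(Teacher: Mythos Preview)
Your proposal has the right overall shape, but Step~2 contains a genuine error that breaks the argument.

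\textbf{The two-coordinate issue.} You assert that at the transition edge ``the four profiles involved (both walks, before and after the update) differ only in coordinate $k$.'' This is not true. On the first half of the path the two parallel walks $x^{(s)}$ and $x'^{(s)}$ always differ in coordinate $i=n-1$ (this difference is what encodes membership in $B_{n-1}^{a,b}$, and coordinate $n-1$ is never touched during the first half). The step itself updates coordinate $k\in\{1,\dots,n-2\}$. Hence the four profiles $x^{(s^*-1)},x'^{(s^*-1)},x^{(s^*)},x'^{(s^*)}$ differ in \emph{two} coordinates, $k$ and $n-1$. (Symmetrically, on the second half they differ in $k$ and $n$, and at the middle edge in $n-1$ and $n$.) Consequently, restricting $f$ to coordinate $k$ alone does not see all four profiles, and your single-voter function $g$ need not take three values. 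The paper instead applies Gibbard--Satterthwaite to the restriction of $f$ to the \emph{two} free coordinates; this is why the reconstruction cost in the counting step picks up an extra $(q!)^2$ for the two coordinates in which the manipulation point $y$ may differ from $u$.

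\textbf{The ``three values'' claim can fail for your choice of $s^*$.} You stop at the first step where some value leaves $\{a,b\}$. But the pair $(f,f)$ may degenerate before that: e.g.\ $(a,b)\to(a,a)\to(c,a)$. At your $s^*$ the four values are then $a,a,c,a$, only two distinct values, and no version of GS applies. The paper avoids this by stopping at the first edge where the pair departs from the specific value $(a,b)$. If the departure stays inside $\{a,b\}$ (say $f$ changes from $a$ to $b$ on one walk), the Type~I property---that the relative order of $a$ and $b$ is preserved---immediately yields a manipulation point at that edge, with no need to invoke GS. Only when the departure introduces a third value does the paper pass to the two-voter GS argument.

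Both issues are fixable along the paper's lines, but as written your Step~2 does not go through, and the counting in Step~3 (which assumes $w$ agrees with $u^{(s^*)}$ off a single coordinate) would have to be adjusted accordingly.
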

\begin{proof}
  Without loss of generality, let $i=n-1$ and $j=n$.
  Fix $(x,x') \in B_i^{a,b}$ and
  $(z,z') \in B_j^{c,d}$.
  Any edge on the canonical path between $(x,x')$ and $(z,z')$
  connects two pairs of points.
  The left-most pair takes the values $(a,b)$ since $f(x)=a$ and $f(x')=b$ while
  the right-most pair takes the values $(c,d)$.
 We claim that somewhere on the path there will be an edge
  $(u,u'),(v,v')$ such that either
  \begin{enumerate}
  \item[I.]
   at least one of $u,u',v,v'$ is a manipulation point.
  \item[II.]
   $f$ takes on at least three values on the points $u,u',v,v'$.
  \end{enumerate}
 To see this note that at least one of three things must happen:
  \begin{enumerate}
  \item
    Somewhere along the first half of the path the values
    of the pair changes from $(a,b)$ to something else.
   If the first value changes to $b$ then $f(x^{(k)})=a$ and
    $f(x^{(k+1)})=b$, but since the order of $a,b$ are preserved under
    Type I moves either $x^{(k)}$ or $x^{(k+1)}$ must be a manipulation point.
    A similar logic applies when the second value changes to $a$.
   Otherwise, one of the values are not in $\{a,b\}$ and therefore
    f takes on at least three values on the two pairs of this edge.
  \item
    Somewhere along the second half
    of the path - starting from the end - the values of the pair
    changes from $(c,d)$ to something else.
    If the first value changes to $d$ or the second value changes to $c$ we have
    a manipulation point since the order of $c,d$ are
    preserved under Type II moves.
    Otherwise, one of the values are not in $\{c,d\}$.
 \item
    The middle edge
    $(x^{(n-2)},x'^{(n-2)}) , (z^{(n-2)},z'^{(n-2)})$
    connects a pair with values $(a,b)$ and a pair
    with values $(c,d)$.
  \end{enumerate}

 Let $(u,u'),(v,v')$ be the first edge where one of I. or II. holds
  and note that $u,u',v,v'$ agree in all but
  two coordinates, either $\{n-1,k\}$, $\{n,k\}$ or $\{n,n-1\}$
  depending on whether the edge $(u,u'),(v,v')$
  is on the first part of the path, the second part or is the middle edge.

  We now claim that we can find a manipulation point $y$
  such that $u,u',v,v'$ and $y$ agree in all but two coordinates.
  We will let $h((x,x'),(z,z'))$ be this $y$.

  For case I. this is obvious and we can let $y$ be the any of
  $u,u',v,v$ which is a manipulation point.

  For case II., by applying the Gibbard-Satterthwaite theorem
  (Th. \ref{thm:GS}) on the restriction of $f$ to the two coordinates
  on which $u,u',v,v'$ differ we can identify a
  manipulation point $y \in L_q^n$ which only differ from $u,u',v,v'$
  on these two coordinates and also is a manipulation point of the
  original function $f$ (if there is more than one possible
  manipulation point we can just pick say the lexicographically
  smallest one).

  It remains to count the number of inverses of a manipulation point
  $y$ associated with the edge $(u,u'),(v,v')$ which can be
  any of the $2n-3$ edges of the canonical path.
  Given the edge number and $y$, there are only $(q!)^2$ possibilities
  for $u$.
  Given $u$ and the edge number there are only
  $(q!)^n$ possibilities for $x$ and $z$.
  To see this note that for each $k \in [n]$ we must have either
  \begin{itemize}
  \item
   $u_k=x_k$. In this case there are $q!$ possibilities for $z_k$.
  \item
   $u_k=z_k$. In this case there are $q!$ possibilities for $x_k$.
  \item
   $x_k,u_k,z_k$ is the canonical path from
    Definition~\ref{def:sim_canon} between $x_k$ and $z_k$.
    Then there are $\frac{q!}{2}$ possibilities for $x_k$ and $2$
    possibilities for $z_k$.
  \end{itemize}
 Finally, given $x$ and $z$ there are at most $(q!)^2$ possibilities
  for $x'$ and $z'$.
  Overall we have:
  \begin{equation}
   |h^{-1} (y) | \le (2n-3) (q!)^{n+4}
  \end{equation}
\end{proof}

\begin{proof}[Proof of Theorem \ref{thm:neutralBound}]
  By Corollary \ref{cor:neutralPairs} we can find
  distinct $i,j \in [n]$ and
  distinct $a,b,c,d \in [q]$
  such that
  \begin{equation}
    |B_i^{a,b} (f)| \ge \frac{\eps}{n q^2 (q-1)} (q!)^{n+1}
    \text{ and }
    |B_j^{c,d} (f)| \ge \frac{\eps}{n q^2 (q-1)} (q!)^{n+1}
  \end{equation}
 Applying Lemma \ref{lem:inverseImageBound} we see that
  \begin{equation}
    |M|
    \ge
   \frac{|B_i^{a,b} (f) \times B_j^{c,d} (f)|}{2n (q!)^{n+4}}
    \ge
   \frac{\eps^2}{2n^3 q^4 (q-1)^2 (q!)^2} (q!)^n
    \ge
   \frac{\eps^2}{2n^3 q^6(q!)^2} (q!)^n
  \end{equation}
  Hence,
  \begin{equation}
    \P(f\text{ is manipulable at } X)
    \ge
   \frac{\eps^2}{2n^3 q^6(q!)^2}
  \end{equation}
\end{proof}

\section{Canonical Paths and Group
  Actions}\label{sec:canon-paths-group}

In order to derive the more refined result, we will need to consider in more
detail the properties of the permutation group $L_q$ with respect to adjacent
transpositions. Again we use canonical paths arguments. We state the
arguments in a more general setup.

\begin{definition}
Let $L$ be a set.
\begin{itemize}
\item Let $P_L(\ell)$ denote the set of paths of length at most $\ell$ in $L$
  and $P_L=\cup_{\ell \in \mathbb{N}} P_L(l)$ the set of paths of finite length.
\item
  Let $L_1,L_2 \subseteq L$.
  A {\em canonical path map} on $L$  from $L_1$ to $L_2$ of {\em length} $\ell$ is a map $\Gamma \colon L_1 \times L_2 \to
  P_L(\ell)$ which satisfies that $\Gamma(x,y)$ begins at $x$ and ends at $y$
  for all $(x,y) \in L_1 \times L_2$.
\item
  Given a canonical path map $\Gamma \colon L_1 \times L_2 \to
  P_L(\ell)$
  and $0 \leq i \le \ell$ we define the inverse image mapping of the
  $i$'th vertex, $\Gamma_i^{-1}:L \rightarrow 2^{L_1 \times L_2}$ as
  \[ \Gamma_i^{-1}(z) = \{ (x,y)
  \mid \length(\Gamma(x,y)) \ge i, \Gamma(x,y)_i = z\}. \]
  Further, we let
  \begin{equation*}
    \Gamma^{-1}(z) = \cup_{i=0}^{\ell} \Gamma_i^{-1}(z)
  \end{equation*}
\item Given a
  group $H$ acting on $L$ we say that a canonical path map
  $\Gamma \colon L_1 \times L_2 \to P_L(\ell)$ is $H$-{\em invariant} if
  $H L_1 = L_1$ and $H L_2 = L_2$ and \[ \Gamma(h x, h y) = h \Gamma(x, y), \]
  for all $h \in H$ and all $(x,y) \in L_1 \times L_2$.
\end{itemize}
\end{definition}

We will use the following proposition. Recall that a group $H$ acting on $L$ is
called {\em fixed-point-free} if for all $x \in L$ and all $h \in H$ different than the
identity it holds that $h x \neq x$.

\begin{proposition} \label{prop:canon_sym}
Let $H$ be a fixed-point-free group acting on $L$ and let $\Gamma \colon L_1 \times L_2 \to P_L(\ell)$ be a canonical path map that is $H$-invariant.
Then for all $z\in L$ and $0\le i \le l$ it holds that
\begin{equation} \label{eq:canon_sym_i}
  |\Gamma^{-1}_i(z)| \le \frac{|L_1| |L_2|} {|H|}
\end{equation}
and
\begin{equation} \label{eq:canon_sym}
  |\Gamma^{-1}(z)| \leq \frac{(\ell+1) |L_1| |L_2|} {|H|}
\end{equation}
\end{proposition}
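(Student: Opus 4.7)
The plan is to exploit the diagonal action of $H$ on $L_1 \times L_2$, defined by $h \cdot (x,y) = (hx, hy)$. Since $H$ acts fixed-point-freely on $L$, the induced diagonal action on $L_1 \times L_2$ is also fixed-point-free (if $(hx, hy) = (x,y)$ then $hx = x$ forces $h=e$), and the hypotheses $HL_1 = L_1$ and $HL_2 = L_2$ ensure the action is well defined. Consequently every orbit of $H$ on $L_1 \times L_2$ has size exactly $|H|$, so the total number of orbits is $|L_1||L_2|/|H|$.

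For the inequality \eqref{eq:canon_sym_i}, I would show that each orbit contributes at most one element to $\Gamma_i^{-1}(z)$. Suppose $(x,y)$ and $(hx, hy)$ both lie in $\Gamma_i^{-1}(z)$. By $H$-invariance of $\Gamma$ we have
\[
\Gamma(hx, hy)_i = h \cdot \Gamma(x,y)_i = h z,
\]
while by assumption $\Gamma(hx, hy)_i = z$. Hence $hz = z$, and the fixed-point-free property of $H$ forces $h = e$, i.e.\ the two pairs coincide. Summing over orbits gives
\[
|\Gamma_i^{-1}(z)| \le \frac{|L_1| |L_2|}{|H|}.
\]

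The second inequality \eqref{eq:canon_sym} then follows immediately from the union bound
\[
|\Gamma^{-1}(z)| = \Bigl|\bigcup_{i=0}^{\ell} \Gamma_i^{-1}(z)\Bigr| \le \sum_{i=0}^{\ell} |\Gamma_i^{-1}(z)| \le (\ell+1)\frac{|L_1||L_2|}{|H|}.
\]

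There is no real obstacle here; the argument is essentially a two-line orbit-counting observation once one notices that $H$-invariance of $\Gamma$ together with the fixed-point-free action turns the statement ``$(x,y)$ and $(hx, hy)$ both pass through $z$ at step $i$'' into the statement ``$h$ fixes $z$''. The only point worth being careful about is distinguishing between the action of $H$ on $L$ (used to conclude $hz = z \Rightarrow h = e$) and the diagonal action on $L_1 \times L_2$ (used to count orbits).
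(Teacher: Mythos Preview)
Your proof is correct and is essentially the same orbit-counting argument as the paper's, just phrased dually: the paper averages over the $H$-orbit of $z$ in $L$ (using that the sets $\Gamma_i^{-1}(hz)$ are disjoint, all of equal size by $H$-invariance, and number $|H|$ by fixed-point-freeness), while you count $H$-orbits in $L_1\times L_2$ and show each contributes at most once to $\Gamma_i^{-1}(z)$. The second inequality is obtained identically in both.
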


\begin{proof}
  Note that for all $i$, \[
  |L_1 \times L_2| \ge \sum_{w} |\Gamma^{-1}_i(w)| = \sum_{h \in
    H} |\Gamma^{-1}_i(h z)| = |H| |\Gamma^{-1}_i(z)|, \] where the
  first inequality follows since the value of the $i$'th vertex partitions the set of paths of length at least $i$,
  the first equality since $H$ is fixed-point-free, and the final equality from the path being $H$-invariant.
  We thus obtain: \[
  |\Gamma^{-1}(z)| \leq \sum_{i=0}^{\ell} |\Gamma_i^{-1}(z)| \leq
  \frac{(\ell+1) |L_1| |L_2|} {|H|}, \] as needed.
\end{proof}

Two applications of the result above will be given for adjacent transpositions.
\begin{definition}
 Given two elements $a,b \in [q]$ the {\em adjacent transposition} $\adj a b$
  between them is defined as follows. If $x \in L_q$ has $a$ and $b$ adjacent,
  then $\adj{a}{b} x$ is obtained from $x$ be exchanging $a$ and $b$. Otherwise,
  $\adj{a}{b} x = x$.

  We let $T$ denote the set of all $q(q-1)/2$ adjacent transpositions. Given $z
  \in T$, we define \begin{eqnarray}
    \Inf_i^{a,b ; z}(f) &=& \P(f(X) = a, f(X^{(i)})=b)
    \\
    \Inf_i^{a ; z}(f) &=& \P(f(X) = a, f(X^{(i)}) \neq a)
    \\
  \Inf_i^{a,b ; T}(f) &=& \sum_{z \in T} \Inf_i^{a,b;z}(f)
\end{eqnarray}
where $X^{(i)}$ is obtained from $X$ by re-randomizing the $i$:th coordinate $X_i$ in the
following way: with probability $1/2$ we keep it as $X_i$ and otherwise we replace
it by $zX_i$.

Finally for $x \in L_q^n$ we will let $\adji{a}{b}{i} x$ denote the element
obtained by applying $\adj{a}{b}$ on the $i$:th coordinate of $x$ while leaving
all other coordinates unchanged. \end{definition}

\begin{proposition} \label{prop:canon1}
  There exists a canonical path map
  $\Gamma \colon L_q \times L_q \to P_{L_q}(\ell)$
  of length at most $\ell = q(q-1)/2 < q^2/2$, all of whose edges are
  adjacent transpositions such that for all $z$ it holds that:
  \begin{equation}
    \label{eq:canon1}
    |\Gamma^{-1}(z)| \leq \frac{q^2 q!}{2}
  \end{equation}
\end{proposition}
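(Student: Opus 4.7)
The plan is to define $\Gamma(x,y)$ via a natural insertion-sort procedure that transforms $x$ into $y$ one alternative at a time, and then to verify that this map is invariant under the regular left action of $L_q$ on itself so that Proposition~\ref{prop:canon_sym} applies directly.

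First I would describe the path explicitly. Given $x,y \in L_q$, set $x^{(0)}=x$. At step $k=1,\ldots,q-1$ the current ranking $x^{(k-1)}$ already agrees with $y$ in positions $1,\ldots,k-1$; I locate the (unique) position $i\ge k$ at which $y(k)$ sits in $x^{(k-1)}$ and bring it to position $k$ by performing $i-k$ consecutive swaps of $y(k)$ with its current left neighbor. At the moment of each swap the two alternatives involved are adjacent in the ranking, so every step is a genuine adjacent transposition $\adj{a}{b}$ with $a=y(k)$. After step $q-1$ all positions agree with $y$, so the path ends at $y$. The total number of swaps equals the number of inversions between $x$ and $y$, which is at most $\binom{q}{2}=q(q-1)/2=\ell$.

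Next I would verify $H$-invariance, where $H=L_q$ acts on $L_q$ by left composition $(h,x)\mapsto h\circ x$. This action is fixed-point-free: if $h\circ x=x$ then $h(x(i))=x(i)$ for every $i$, and since $x$ is a bijection onto $[q]$ this forces $h=\mathrm{id}$. The algorithm above is driven purely by \emph{positional} data -- which position currently holds $y(k)$ -- so feeding in $(hx,hy)$ produces the same sequence of positional swaps with the alternative labels transported by $h$. Because $\adj{h(a)}{h(b)}(h\circ z)=h\circ\adj{a}{b}z$ for every $z\in L_q$ (both the ``adjacent'' and ``non-adjacent'' cases are immediate), the entire path is translated by $h$, so $\Gamma(hx,hy)=h\,\Gamma(x,y)$.

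Finally I would invoke Proposition~\ref{prop:canon_sym} with $L_1=L_2=L_q$ and $|H|=q!$, obtaining
\[
|\Gamma^{-1}(z)|\ \le\ \frac{(\ell+1)\,|L_q|^2}{|H|}\ =\ \Bigl(\tfrac{q(q-1)}{2}+1\Bigr)q!\ =\ \tfrac{q^2-q+2}{2}\,q!\ \le\ \tfrac{q^2\,q!}{2},
\]
the last inequality holding for all $q\ge 2$. There is no serious obstacle; the only point that requires a bit of care is arranging the algorithm so that it is intrinsically positional and therefore automatically $L_q$-equivariant, after which the length bound and the congestion bound are immediate consequences of Proposition~\ref{prop:canon_sym}.
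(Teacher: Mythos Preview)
Your proof is correct and follows essentially the same approach as the paper: the same insertion-sort/bubble path (bring $y(1)$ to the top, then $y(2)$ to position $2$, etc.), the same group $H=L_q$ acting by left composition, and the same appeal to Proposition~\ref{prop:canon_sym}. Your write-up is in fact more explicit than the paper's in verifying the $H$-invariance and the fixed-point-freeness of the action.
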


\begin{proof}
  Given $x,y \in L_q$ consider the following canonical path starting at $x$ and
  ending at $y$. Take the element $y(1)$ ranked at the top for $y$ and bubble it
  to the top by performing adjacent transpositions. Then take the element $y(2)$
  ranked second for $y$ and bubble it to the second position etc. Clearly the
 length of the path is at most $q(q-1)/2$.
  Let $H=\{x \mapsto px \mid p \in L_q\}$ be the group of compositions with
  all possible permutations of the candidates.
  Since $H$ is a fixed-point-free group acting on $L_q$ and the described canonical path map is $H$-invariant
  the result follows from Proposition~\ref{prop:canon_sym}. \end{proof}

\begin{corollary}~\label{cor:sumInfVarBound2}
 For any $f \colon L_q^n \rightarrow [q]$, $a \in [q]$ and $i \in [n]$ it holds that
  \begin{equation}
   \sum_{z \in T} \Inf_i^{a; z}(f) \geq \frac{1}{q^2} \Inf_i^a(f),
  \end{equation}
  where $T$ is the set of all adjacent transpositions.
\end{corollary}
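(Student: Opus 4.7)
The plan is to condition on the coordinates other than the $i$th, reduce the claim to a pointwise isoperimetric-type inequality on $L_q$, and then apply the canonical paths of Proposition~\ref{prop:canon1}.

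First, fix $x_{-i} \in L_q^{n-1}$ and set $A_{x_{-i}} = \{u \in L_q : f(x_{-i}, u) = a\}$ with complement $A^c_{x_{-i}} = L_q \setminus A_{x_{-i}}$. Expanding the definitions, a direct calculation gives
\begin{equation*}
  \Inf_i^a(f) \;=\; \frac{1}{(q!)^{n+1}} \sum_{x_{-i}} |A_{x_{-i}}|\cdot |A^c_{x_{-i}}|,
\end{equation*}
while for a single adjacent transposition $z \in T$, using that the re-randomized coordinate keeps $X_i$ with probability $1/2$,
\begin{equation*}
  \Inf_i^{a;z}(f) \;=\; \frac{1}{2(q!)^n} \sum_{x_{-i}} \bigl|\{u \in A_{x_{-i}} : z u \notin A_{x_{-i}}\}\bigr|.
\end{equation*}
Summing over $z \in T$ and comparing, the corollary reduces to the pointwise claim: for every fixed $x_{-i}$,
\begin{equation*}
  N(x_{-i}) \;:=\; \bigl|\{(u,z) \in L_q \times T : u \in A_{x_{-i}},\ zu \in A^c_{x_{-i}}\}\bigr| \;\geq\; \frac{2\,|A_{x_{-i}}|\cdot |A^c_{x_{-i}}|}{q^2 \cdot q!}.
\end{equation*}

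Next I would prove this pointwise bound via canonical paths. Fix $x_{-i}$ and write $A = A_{x_{-i}}$. For each ordered pair $(u,v) \in A \times A^c$ take the canonical path $\Gamma(u,v)$ from Proposition~\ref{prop:canon1}, whose edges are all adjacent transpositions in $L_q$. Since the path starts in $A$ and ends in $A^c$, at least one of its edges $(w,zw)$ is \emph{bad}, meaning $w \in A$ and $zw \in A^c$; each such bad edge contributes one pair to $N(x_{-i})$. Counting (path, bad-edge) incidences in two ways gives
\begin{equation*}
  |A|\cdot |A^c| \;\leq\; \sum_{\text{bad edges }(w,zw)} \#\bigl\{(u,v) \in A \times A^c : \Gamma(u,v) \text{ contains } (w, zw)\bigr\}.
\end{equation*}

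Finally, I would bound the multiplicity. Any canonical path containing the edge $(w,zw)$ in particular visits the vertex $w$, so Proposition~\ref{prop:canon1} guarantees that at most $q^2\cdot q!/2$ canonical paths contain any given edge. Dividing the previous displayed inequality by this bound yields $N(x_{-i}) \geq 2\,|A|\cdot |A^c|/(q^2 \cdot q!)$, which, plugged back into the expressions from the first paragraph, gives the claim $\sum_{z \in T}\Inf_i^{a;z}(f) \geq \Inf_i^a(f)/q^2$. I don't expect any real obstacle here; the only bookkeeping is to track the factor of $1/2$ in the definition of $X^{(i)}$ and to pass from the vertex-inverse-image bound of Proposition~\ref{prop:canon1} to the edge-inverse-image bound actually used, which is immediate since the edge $(w,zw)$ contains $w$.
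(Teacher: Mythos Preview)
Your proposal is correct and follows essentially the same approach as the paper: both reduce to a single coordinate (you by conditioning on $x_{-i}$, the paper by setting $n=1$), and both use the canonical paths of Proposition~\ref{prop:canon1} together with the vertex inverse-image bound $|\Gamma^{-1}(w)|\le q^2q!/2$ to compare the number of $A$-to-$A^c$ adjacent-transposition edges with $|A|\cdot|A^c|$. The only cosmetic difference is that the paper maps each pair $(u,v)\in A\times A^c$ to the \emph{first} boundary edge on $\Gamma(u,v)$, whereas you count all (path, bad-edge) incidences; both yield the same inequality.
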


\begin{proof}
  This is a standard canonical path argument. Since both sides of the desired
  inequality involve averaging over all coordinates but the $i$'th coordinate,
  it follows that it suffices
  to prove the claim in the case where $i=n=1$.
  Let $B = \{(u,v) \in L_q \times L_q \mid f(u) = a \neq f(v), \exists z \in T: v = z u \}$ and
  note that
  \begin{equation}
    \label{eq:sumInfVarBound2a}
    \sum_{z \in T} \Inf_1^{a; z}(f) = \frac{|B|}{2q!},
  \end{equation}
  Consider the canonical path map $\Gamma$ constructed in Proposition~\ref{prop:canon1}.
  Note that each canonical path
  between an element in $A:=\{x\in L_q \mid f(x)=a\}$ and an element in $A^c$ must pass via one of the edges in $B$.
  Define $h:A\times A^C \to B$ by letting $h(x,y)$ be the first edge in $B$ which $\Gamma(x,y)$ passes through.
  Then by \eqref{eq:canon1}, for any $(u,v) \in B$,
  \begin{equation}
    |h^{-1}((u,v))| \le |\Gamma^{-1}(u)| \le \frac{q^2 q!}{2}
  \end{equation}
  Thus
  \begin{equation}
    \label{eq:sumInfVarBound2c}
    |B| \geq \frac{|A| |A^c|}{q^2 q! / 2}
  \end{equation}
  Combining \eqref{eq:sumInfVarBound2a} and \eqref{eq:sumInfVarBound2c} we obtain:
  \begin{equation*}
    \sum_{z \in T} \Inf_1^{a; z}(f) \geq \frac{1}{2q!}
    \frac{|A| |A^c|}{q^2 q! / 2} = \frac{1}{q^2} \frac{|A|}{q!}
    \frac{|A^c|}{q!} =
    \frac{1}{q^2} \Inf_1^a(f)
  \end{equation*}
\end{proof}

A second application of Proposition~\ref{prop:canon1} is the following.

\begin{proposition} \label{prop:canon2}
  Fix two elements $a, b \in [q]$ and let
  $B \subseteq L_q$ denote the set of all permutations where $a$ is ranked above
  $b$. Then there exists a canonical path map $\Gamma:B\times B \to P_{B}(q^2)$
  consisting of adjacent transpositions such that
  all permutations along the path satisfy that $a$ is ranked above $b$. Moreover
  for all $z$ it holds that:
  \[
  |\Gamma^{-1}(z)|
  \leq
  q^4 q!
  \]
\end{proposition}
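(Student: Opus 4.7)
My plan is to imitate the proof of Proposition~\ref{prop:canon1} nearly verbatim, but using a smaller symmetry subgroup that preserves $B$ and verifying that the bubble-sort path never crosses the relation $a > b$. Let $H = \{p \in L_q : p(a) = a,\ p(b) = b\}$ act on $L_q$ by $x \mapsto px$. Then $|H| = (q-2)!$, the action preserves $B$ (since $p \in H$ fixes both $a$ and $b$ as labels, the positions of $a$ and $b$ in $px$ coincide with those in $x$), and is fixed-point-free.

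For $x, y \in B$ I will define $\Gamma(x, y)$ by the same top-to-bottom bubble sort as in Proposition~\ref{prop:canon1}: in phase $k = 1, \ldots, q$, bubble the element $y(k)$ from its current position $\alpha \geq k$ to target position $k$, one adjacent transposition at a time. The total length is at most $\binom{q}{2} \leq q^2$, every edge is an adjacent transposition, and the construction is $H$-equivariant: the position of $p(y(k))$ in $px$ equals the position of $y(k)$ in $x$, so applying $p$ to every intermediate vertex yields precisely the bubble-sort path for $(px, py)$.

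The step requiring actual verification---which I expect to be the main obstacle---is showing that no adjacent transposition along the path ever swaps $a$ with $b$, so that every intermediate permutation remains in $B$. Write $p', r'$ for the positions of $a$ and $b$ in $y$, so $p' < r'$ since $y \in B$. A short case analysis handles it: bubbling any element $e \notin \{a, b\}$ past a block that contains both $a$ and $b$ shifts them both down by the same amount, preserving their order; at phase $p'$, when $a$ itself is bubbled up, $b$ still lies strictly below $a$'s source position by the inductive preservation of $a > b$, so $b$ is never met on the way up; at phase $r'$, $a$ has already been locked at position $p' < r'$, so bubbling $b$ up to position $r'$ cannot reach $a$. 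The only risk is an off-by-one slip in tracking the positions of $a$ and $b$ across phases.

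Granted all of the above, Proposition~\ref{prop:canon_sym} applied to the fixed-point-free action of $H$ on $L = B$ gives
\[
|\Gamma^{-1}(z)| \leq \frac{(\ell + 1)\,|B|^2}{|H|} \leq \frac{(q^2 + 1)\,(q!/2)^2}{(q-2)!} = \frac{(q^2+1)\,q(q-1)}{4}\,q! \leq q^4\, q!,
\]
which is the desired bound.
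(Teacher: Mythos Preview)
Your proof is correct and follows essentially the same strategy as the paper: construct a bubble-sort canonical path that stays in $B$, observe it is invariant under the group $H$ of permutations fixing $a$ and $b$, and apply Proposition~\ref{prop:canon_sym} with $|H|=(q-2)!$ and $|B|=q!/2$ to get the bound $q^4 q!$.

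The only difference is in the specific path chosen. The paper first bubbles all elements other than $a,b$ into place (pushing $a,b$ to the bottom two positions in their original order) and only then bubbles $a$ and then $b$ up to their targets in $y$; this makes it immediate that $a$ and $b$ are never swapped. You instead use the unmodified top-to-bottom bubble sort of Proposition~\ref{prop:canon1} and verify by a short case analysis that it too never transposes $a$ with $b$. Your variant gives a slightly shorter path ($\binom{q}{2}$ versus $\binom{q}{2}+2(q-1)$), but both fit comfortably under $q^2$ and lead to the same final bound via the identical symmetry argument.
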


\begin{proof}
$\Gamma(x,y)$ is defined as follows.
We look at all elements different than $a,b$, starting with the top one of $y$, and bubble each of them upwards to its position in $y$ ignoring $a,b$.
After we have done so, we have all elements but $a,b$ ordered as in $y$, followed by $a$, followed by $b$. We now bubble $a$ to its location in $y$ and then bubble $b$.
Note that the length of the path so defined is at most
\[
\frac{q(q-1)}{2} + 2 (q-1) = \frac{(q+4)(q-1)}{2} < q^2
\]
The proof now follows from Proposition~\ref{prop:canon_sym} by considering
the group $H$ which acts by permuting arbitrary all elements but those labeled by $a$ and $b$:
\begin{equation*}
  |\Gamma^{-1}(z)|
  \le \frac{q^2 |B|^2}{|H|}
  = \frac{q^2 (q!/2)^2}{(q-2)!}
  \le q^4 q!
\end{equation*}
\end{proof}

\section{Refined Boundaries}\label{sec:refined-boundaries}

Similarly to the previous construction we now define
the $i$:th $a$-$b$ boundary with respect to an adjacent swap $z \in T$ as
\[
B_i^{a,b ; z} (f) = \{(x,y) \mid f(x)=a, f(y)=b, x_i = z y_i, \forall j \neq i:x_j=y_j\},
\]
and the boundary with respect to arbitrary adjacent swaps on the $i$:th coordinate as
\[
B_i^{a,b ; T} (f) = \bigcup_{z \in T} B_i^{a,b;z} (f)
\]
Note that for $a \neq b$,
\begin{equation}\label{eq:inf2Boundary}
  \Inf_i^{a,b;z} (f)
  = \frac{1}{2} \P(f(X)=a,f(zX)=b)
  = \frac{1}{2} \frac{|B_i^{a,b;z}(f)|}{(q!)^n}
\end{equation}

\subsection{Manipulation points on refined boundaries}
The following two lemmas identify manipulation points on these boundaries.

\begin{lemma}
  \label{lem:nonManipBoundary}
  Fix $f \colon L_q^n \rightarrow [q]$,
  distinct $a,b \in [q]$
  and $(x,y) \in B_i^{a,b ; T}$.
  Then either $x_i=\adj{a}{b} y_i$
  or
  one of $x$ and $y$ is a $2$-manipulation point for $f$. \end{lemma}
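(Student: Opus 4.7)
My plan is a short case analysis, and the work is almost entirely in unpacking definitions. First I would unpack the hypothesis: by definition of $B_i^{a,b;T}$, there is some adjacent transposition $z = \adj{c}{d} \in T$ such that $x_j = y_j$ for all $j \neq i$, $x_i = z\,y_i$, $f(x) = a$, and $f(y) = b$. Since $f(x) \neq f(y)$ forces $x_i \neq y_i$, the alternatives $c$ and $d$ must in fact be adjacent in $y_i$, and $x_i$ is obtained from $y_i$ by swapping this adjacent pair.

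Next I would treat two cases according to whether $z$ equals $\adj{a}{b}$. If $\{c,d\} = \{a,b\}$, then $x_i = \adj{a}{b}\,y_i$ and the first alternative in the conclusion holds, so we are done. Otherwise $\{c,d\} \neq \{a,b\}$, and I would establish the key observation that the relative order of $a$ and $b$ is the same in $x_i$ as in $y_i$. This is because $c,d$ are adjacent in $y_i$, so at most one of them can belong to $\{a,b\}$ (if both did, we would have $\{c,d\} = \{a,b\}$, contradicting the case hypothesis), and an adjacent swap of a pair that contains at most one of the two specified alternatives cannot change the rank of one relative to the other.

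To finish, I would split on this common relative order. If $a \stackrel{x_i}{>} b$ (equivalently $a \stackrel{y_i}{>} b$), then at the profile $y$ voter $i$ currently receives outcome $f(y)=b$ but can change her ranking from $y_i$ to $x_i = z\,y_i$, an adjacent swap of $c$ and $d$, to obtain the strictly preferred outcome $f(x)=a$; thus $y$ is a $2$-manipulation point. Symmetrically, if $b \stackrel{x_i}{>} a$, voter $i$ at the profile $x$ can switch from $x_i$ to $y_i$ to improve her outcome from $a$ to $b$, exhibiting $x$ as a $2$-manipulation point. Either way one of $x,y$ is a $2$-manipulation point for $f$, which is what we need. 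The only non-trivial point is the preservation-of-relative-order observation in the middle paragraph; the rest is definition chasing.
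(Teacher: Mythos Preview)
Your proposal is correct and is essentially identical to the paper's own proof: both assume the transposition is some $\adj{c}{d}$ with $\{c,d\}\neq\{a,b\}$, observe that such a swap preserves the relative order of $a$ and $b$, and then split on that common order to exhibit either $x$ or $y$ as a $2$-manipulation point. Your write-up is slightly more explicit (e.g., noting that $f(x)\neq f(y)$ forces $c,d$ to actually be adjacent), but there is no substantive difference.
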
 \begin{proof}
  Suppose $x_i=\adj{c}{d} y_i$ where $\{c,d\} \neq \{a,b\}$.
  Then an adjacent transposition of $c$ and $d$ will not change the order of $a$
  and $b$. Hence
  $
  b \stackrel{x_i}{>} a
  \text{ iff }
  b \stackrel{y_i}{>} a
  $.
  But then either
  i) $f(y) =b \stackrel{x_i}{>} a=f(x)$ and $x$ is a 2-manipulation point
  or
  ii) $f(x)=a \stackrel{y_i}{>} b=f(y)$ and $y$ is a 2-manipulation point.
\end{proof}

\begin{lemma}
  \label{lem:nonManipTriple}
  Fix $f \colon L_q^n \rightarrow [q]$
  and points
  $x,y,z \in L_q^n$ such that
  $(x,y) \in B_i^{a,b; T}$
  $(z,y) \in B_j^{c,b; T}$
  where $a,b,c$ are distinct and $i \neq j$.
  Then there exists a $3$ - manipulation point $w \in L_q^n$ for $f$
  such that $w_k=y_k$ for $k \notin \{i,j\}$
  and $w_i$ is equal to $x_i$ or $y_i$ except that the position of $c$ may be
  shifted arbitrarily
  and $w_j$ is equal to $z_j$ or $y_j$ except that the position of $a$ may be
  shifted arbitrarily. \end{lemma}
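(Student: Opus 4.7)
The plan is to examine the ``fourth corner'' of the $2 \times 2$ rectangle determined by $y$ along coordinates $i$ and $j$, and to case-split on the outcome there. First I would invoke Lemma~\ref{lem:nonManipBoundary} to reduce to the case $x_i = \adj{a}{b}\, y_i$ and $z_j = \adj{c}{b}\, y_j$; otherwise one of $\{x,y\}$ or $\{z,y\}$ is already a $2$-manipulation point that fits the stated form, since each of $x,y,z$ agrees with $y$ off coordinates $\{i,j\}$ and has its $i$-th and $j$-th coordinates in $\{x_i, y_i\}$ and $\{y_j, z_j\}$ respectively. I then define the fourth corner $u \in L_q^n$ by $u_i = x_i$, $u_j = z_j$, and $u_k = y_k$ for $k \notin \{i,j\}$; this produces a $2 \times 2$ rectangle with $f(y) = b$, $f(x) = a$, $f(z) = c$, and $f(u)$ to be analyzed.

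If $f(u) = a$ (resp.\ $f(u) = c$), I focus on the edge $(z,u)$ (resp.\ $(x,u)$), on which $f$ changes from $c$ to $a$ (resp.\ $a$ to $c$) by the adjacent swap $\adj{a}{b}$ in coordinate $i$ (resp.\ $\adj{c}{b}$ in coordinate $j$). Since $a,b$ (resp.\ $c,b$) are adjacent in the relevant preference, the third alternative $c$ (resp.\ $a$) must sit either strictly above or strictly below the adjacent pair; comparing its position to the realized outcome then shows that exactly one endpoint of the edge admits a beneficial swap and hence is a $2$-manipulation point. If $f(u) = b$, then $(x,u) \in B_j^{a,b;T}$ with $u_j = \adj{c}{b}\, x_j \ne \adj{a}{b}\, x_j$ (as $c \ne a$), so Lemma~\ref{lem:nonManipBoundary} directly yields a $2$-manipulation point in $\{x,u\}$. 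If $f(u) = d \notin \{a,b,c\}$, I use that $y_j$ and $z_j$ differ only by swapping $b$ and $c$, which leaves the relative order of $a$ and $d$ intact; whichever of $a,d$ is preferred by voter $j$ under $y_j$ identifies either $x$ (deviating to $u$) or $u$ (deviating to $x$) as a $2$-manipulation point via the $\adj{c}{b}$ swap in coordinate $j$.

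In every case the resulting manipulation point $w$ lies in $\{x,y,z,u\}$, so $w_k = y_k$ for $k \notin \{i,j\}$ and $w_i \in \{x_i, y_i\}$, $w_j \in \{y_j, z_j\}$, fitting the form demanded by the lemma without any repositioning of $c$ or $a$ actually being required. Each manipulation is a $2$-swap in a single coordinate and is therefore a $3$-manipulation a fortiori. I expect the main subtlety to be the last case, $f(u) \notin \{a,b,c\}$: one might fear having to exercise the extra flexibility of moving $c$ or $a$, but the observation that the $\adj{c}{b}$ swap in coordinate $j$ preserves the relative order of any pair disjoint from $\{b,c\}$ lets this case fall out as cleanly as the others.
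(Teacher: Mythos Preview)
Your argument is correct and takes a genuinely different, more elementary route than the paper's.

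The paper first reduces (as you do) to $x_i=\adj{a}{b}\,y_i$ and $z_j=\adj{c}{b}\,y_j$, but then proceeds by \emph{bubbling}: it moves $c$ via adjacent transpositions toward the $ab$-block in coordinate $i$ of all three points, and then $a$ toward the $cb$-block in coordinate $j$, maintaining the boundary conditions along the way by repeated applications of Lemma~\ref{lem:nonManipBoundary}. This produces a configuration in which $a,b,c$ form an adjacent triple in both coordinates $i$ and $j$; the restriction of $f$ to reorderings of these two triples then defines a two-voter three-candidate social choice function that takes three values and is not a dictator, and Theorem~\ref{thm:GS} supplies the manipulation point. The bubbling is exactly what generates the ``position of $c$ (resp.\ $a$) may be shifted arbitrarily'' clauses in the lemma's conclusion.

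Your approach instead looks only at the fourth corner $u$ of the $2\times2$ rectangle and case-splits on $f(u)$. Each branch locates a $2$-manipulation point in $\{x,y,z,u\}$ directly from the observation that an adjacent swap of two alternatives leaves the relative order of every other pair intact --- essentially one further invocation of the mechanism behind Lemma~\ref{lem:nonManipBoundary} (indeed, the cases $f(u)\in\{a,b,c\}$ could each be dispatched by citing that lemma verbatim on an appropriate boundary edge, and the case $f(u)\notin\{a,b,c\}$ by the same one-line order-preservation argument). This sidesteps both the bubbling and the appeal to Gibbard--Satterthwaite, and in fact proves a sharper statement: $w$ always lies in $\{x,y,z,u\}$, so the shifting clauses are never exercised and the manipulation found is a $2$-manipulation. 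The paper's route is heavier here but is consonant with its global strategy of reducing to a small restriction and invoking the qualitative theorem --- a pattern that recurs, with four alternatives, in case~3 of the proof of Lemma~\ref{lem:revInverseImageBound}.
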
 \begin{proof}
  By Lemma~\ref{lem:nonManipBoundary} we must have
  $x_i = \adj{a}{b} y_i$
  and
  $z_j = \adj{c}{b} y_j$,
  or $x$, $y$ or $z$ is a 2-manipulation point in which case we are done.

  Now create a new triple $(x',y',z')$ by starting from $(x,y,z)$ and
  simultaneously in the $i$:th coordinate of $x$, $y$ and $z$, 
  bubbling $c$ towards the pair $ab$ until it becomes adjacent to the pair.
  Since $c$ is never swapped with $a$ or $b$ during this process
  Lemma~\ref{lem:nonManipBoundary} implies that
  for any intermediate triple $(\wt{x}, \wt{y}, \wt{z})$ we have
  $f(\wt{x})=a$, $f(\wt{y})=b$ and $f(\wt{z}) \notin \{a,b\}$,
  or one of $\wt x$, $\wt y$ and $\wt z$ is a 2-manipulation point.
  But since we also have $\wt{z} = \adj{c}{b}_j \wt{y}$, we must actually have
  $f(\wt{z}) = c$, or either $\wt y$ or $\wt z$ is a 2-manipulation point.

  Similarly bubbling $a$ towards the pair $bc$ in coordinate $j$
  starting from $(x',y',z')$
  gives us $x'',y'',z''$ all having $a,b,c$ adjacent in
  coordinates $i$ and $j$ such that
  $(x'',y'') \in B_i^{a,b; [a:b]}$ and
  $(z'',y'') \in B_j^{c,b; [c:b]}$.
  Note that $x'', y'', z''$ are equal except for a reordering of the blocks containing $a,b,c$ in coordinates $i$ and $j$.

  Now arbitrary adjacent swapping of $a,b,c$ in these coordinates of
  $x'',y''$ and $z''$ will keep the value of $f$ in $\{a,b,c\}$,
  or give rise to a 2-manipulation point by Lemma~\ref{lem:nonManipBoundary}.
  Thus we can define a social choice function with 2 voters and 3 candidates
  $f':L_{\{a,b,c\}}^2 \rightarrow \{a,b,c\}$ by letting $f'(v)=f(g(v))$, where
  $g(v) \in L_q^n$ is obtained from $x''$ by simply reordering the two blocks of elements $a,b,c$ in coordinates $i$ and $j$ to match $v_1$ and $v_2$, respectively.
  Since $f'$ takes three values and is not a dictator,
  Gibbard-Satterthwaite (Theorem~\ref{thm:GS}) implies that
  $f'$ has a manipulation point and hence
  $f$ has a 3-manipulation point satisfying our requirements.
\end{proof}

\subsection{Large Refined Boundaries}

Now we possess the right tools to prove the analogue of
Lemma~\ref{lem:boundaries1} for refined boundaries.

\begin{lemma} \label{lem:boundaries2}
  Fix $q \ge 3$ and
  $f \colon L_q^n \rightarrow [q]$
  satisfying $\Dist(f, \NONMANIP) \ge \epsilon$.
  Let $X$ be uniformly selected from $L_q^n$.
  Then either,
  \begin{equation}
    \label{eq:neutralPairs3ManipProb}
    \P(f \text{ is 2-manipulable at } X) \ge \frac{4\eps}{n q^7}
  \end{equation}
  or there exist distinct $i,j \in [n]$
  and $\{a,b\},\{c,d\} \subseteq [q]$ such that $c \notin \{a,b\}$ and
  \begin{equation}
   \Inf_i^{a,b;[a:b]} (f) \ge \frac{2\eps}{n q^7}
    \text{ and }
   \Inf_j^{c,d;[c:d]} (f) \ge \frac{2\eps}{n q^7},
  \end{equation}
  \todo{Sharpened to $q^7$ from $q^8$. Fix applications.}
\end{lemma}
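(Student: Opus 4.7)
The plan is to mirror the proof of Lemma~\ref{lem:boundaries1} at the level of refined influences $\Inf_i^{a,b;[a:b]}(f)$, treating 2-manipulations as the obstruction that prevents us from moving from ``some $z \in T$'' to the specific adjacent transposition $[a:b]$. The key observation is that by Lemma~\ref{lem:nonManipBoundary}, every $(x,y) \in B_i^{a,b;z}(f)$ with $z \neq [a:b]$ has one of $x,y$ a 2-manipulation point; and since the role of $w$ in $M_2$ (the set of 2-manipulation points) as $x$ or $y$ is forced by whether $f(w)=a$ or $f(w)=b$, no point $w$ participates in more than one pair of $B_i^{a,b;z}(f)$. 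Therefore $|B_i^{a,b;z}(f)| \le |M_2|$, which via~(\ref{eq:inf2Boundary}) gives $\Inf_i^{a,b;z}(f) \le \tfrac{1}{2}\P(f\text{ is 2-manipulable at }X)$ for every $z \in T\setminus\{[a:b]\}$.

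Set $\tau := \frac{2\eps}{nq^7}$ and $\mu := \frac{4\eps}{nq^7}$. Assume the first conclusion~(\ref{eq:neutralPairs3ManipProb}) fails, so that $\P(f\text{ is 2-manipulable at }X) < \mu$, and hence $\Inf_i^{a,b;z}(f) < \mu/2$ for all $z \neq [a:b]$. Define $A^{a,b;[a:b]} := \{i \in [n] : \Inf_i^{a,b;[a:b]}(f) \ge \tau\}$. I would then prove two claims paralleling those in the proof of Lemma~\ref{lem:boundaries1}.

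The first claim states: for every $\{a,b\}$ there exist $c \notin \{a,b\}$ and $d \neq c$ with $A^{c,d;[c:d]} \neq \emptyset$. As in the original proof, $\Dist(f,\NONMANIP) \ge \eps$ lets us pick $c \notin \{a,b\}$ with $\Var[1_{f=c}] \ge \eps(q-1)/q^2$. Chaining Proposition~\ref{prop:sumInfVarBound} and Corollary~\ref{cor:sumInfVarBound2},
\[
\sum_{i,d}\Inf_i^{c,d;T}(f) = \sum_{i,z}\Inf_i^{c;z}(f) \ge \frac{1}{q^2}\sum_i\Inf_i^c(f) \ge \frac{\eps(q-1)}{q^4}.
\]
Subtracting the $z \neq [c:d]$ contributions using the key observation yields $\sum_{i,d}\Inf_i^{c,d;[c:d]}(f) \ge \eps(q-1)^2/q^5$, and pigeonhole over the $n(q-1)$ pairs $(i,d)$ produces some $(i,d)$ with $\Inf_i^{c,d;[c:d]}(f) \ge \tau$ (for $q \ge 4$). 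The second claim states $\bigl|\bigcup_{a,b}A^{a,b;[a:b]}\bigr| \ge 2$. Suppose for contradiction the union is contained in some $\{i\}$; then for every $j \neq i$ and every $\{a,b\}$, $\Inf_j^{a,b;[a:b]} < \tau$. Combined with $\Inf_j^{a,b;z} < \mu/2$ for $z \neq [a:b]$ and Corollary~\ref{cor:sumInfVarBound2},
\[
\Inf_j(f) \le q^2\sum_{a,b}\Inf_j^{a,b;T}(f) < q^4\tau + q^6\mu/4 < \frac{2\eps}{nq}.
\]
Summing over $j \neq i$ gives $q\sum_{j\neq i}\Inf_j(f) < 2\eps$, and the averaging argument from Lemma~\ref{lem:boundaries1}'s proof---restricting $f$ to each value $\sigma$ of coordinate $i$, then applying Lemma~\ref{lem:constDist} and Proposition~\ref{prop:sumInfVarBound}---yields $\Dist(f,\DICT_i) < \eps$, contradicting $\Dist(f,\NONMANIP) \ge \eps$.

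Combining the two claims exactly as in Lemma~\ref{lem:boundaries1}'s proof produces distinct $i, j$ and pairs $\{a,b\} \neq \{c,d\}$ with $i \in A^{a,b;[a:b]}$ and $j \in A^{c,d;[c:d]}$; the symmetry $\Inf_j^{a,b;[a:b]} = \Inf_j^{b,a;[a:b]}$ (which holds because $[a:b]$ is an involution and makes the joint distribution of $(X,X^{(j)})$ exchangeable) allows any relabeling needed to ensure $c \notin \{a,b\}$. The main obstacle is the delicate accounting of constants: the $q^2$ loss from Corollary~\ref{cor:sumInfVarBound2} and the $|T| \le q^2/2$ loss from summing over ``wrong'' transpositions compound into $q^4$ and $q^6$ factors that must stay below the $q^7$ denominator of $\tau$, which is only possible thanks to the sharpened bound $|B_i^{a,b;z}| \le |M_2|$ (rather than $2|M_2|$) established at the outset.
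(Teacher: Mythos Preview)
Your proposal is correct and follows essentially the same route as the paper's proof: establish (via Lemma~\ref{lem:nonManipBoundary} and the injectivity argument you spell out) that either the 2-manipulation probability is large or $\Inf_i^{a,b;z}(f)<\frac{2\eps}{nq^7}$ for every $z\neq[a:b]$, then rerun the two claims of Lemma~\ref{lem:boundaries1} at the level of the refined influences $\Inf_i^{a,b;[a:b]}$. The only cosmetic difference is in the first claim: you subtract off the ``wrong-$z$'' contributions before pigeonholing over $(i,d)$, whereas the paper pigeonholes over all triples $(i,d,w)$ to find a single $\Inf_i^{c,d;w}\ge\eps/(nq^6)$ and then observes this exceeds the assumed bound $\frac{2\eps}{nq^7}$, forcing $w=[c:d]$; your route yields slightly looser (but still sufficient) constants, and your ``for $q\ge4$'' caveat is actually unnecessary since $(q-1)q^2\ge2$ already for $q\ge3$.
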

\begin{proof}
  First, suppose that $\Inf_i^{a,b;z} \ge \frac{2\eps}{nq^7}$
  for some $i$, $a \neq b$ and $z \neq [a:b]$.
  Then by Lemma~\ref{lem:nonManipBoundary}
  for any point $(x,x') \in B_i^{a,b;z}(f)$
  at least one of $x$ or $x'=zx$ is a $2$-manipulation point.
  Let $\wt{M}$ be the set of all such 2-manipulation points.
  Then
  \begin{equation}
   |\wt{M}| \ge |B_i^{a,b;z}(f)|
    =
    2 (q!)^n \Inf_i^{a,b;z} (f)
    \ge
    \frac{4\eps}{n q^7} (q!)^n
  \end{equation}
  Dividing with $(q!)^n$ gives \eqref{eq:neutralPairs3ManipProb}.
  Thus, for the remainder of the proof we may assume that
  \begin{equation}
    \label{eq:boundariesAssump}
    \Inf_i^{a,b;z} < \frac{2\eps}{nq^7} \quad,\quad
    \forall i\in [n], \{a,b\} \subseteq [q], z \neq [a:b]
  \end{equation}
  Now, for $a \neq b$ let
  $
  A^{a,b} =
  \left\{
    i\in [n] \mid \Inf_i^{a,b;[a:b]} \ge \frac{2\eps}{n q^7}
  \right\}
  $.

  We first claim that for all $\{a,b\}$ there exists $\{c,d\}$ such that $\{c,d\} \neq \{a,b\}$ and $A^{c,d} \neq \emptyset$.
  Note that $f$ being $\eps$-far from taking two values asserts that we can find
  a $c \notin \{a,b\}$
  such that $1-\frac{\eps}{q} \ge \P(f(X) = c) \ge \frac{\eps}{q-2} \ge \frac{\eps}{q}$.
  But then, by Corollary~\ref{cor:sumInfVarBound2} and Proposition~\ref{prop:sumInfVarBound},
  \begin{equation*}
    \sum_{w\in T} \sum_{d \neq c} \sum_{i=1}^n \Inf_i^{c,d;w} (f)
    =
    \sum_{w\in T} \sum_{i=1}^n \Inf_i^{c;w} (f)
    \ge
    \frac{1}{q^2} \Var [ 1_{\{f(X) = c\}} ]
    \ge
    \frac{\eps (q-1)}{q^4}
  \end{equation*}
  hence there must exist some $w\in T$, $d \neq c$ and $i \in [n]$ such that
  $\Inf_i^{c,d;w} \ge \frac{\eps}{n q^6}$.
  But by \eqref{eq:boundariesAssump} we must have $w=[c:d]$, hence
  $A^{c,d} \neq \emptyset$.

  We next claim that
  \begin{equation}
    \label{eq:unionContains2}
    |\cup_{a,b} A^{a,b}| \ge 2
  \end{equation}
  To see this, assume the contrary, i.e.
  $\cup_{a,b} A^{a,b} \subseteq \{i\}$ for some $i \in [n]$.
  Then, by Corollary~\ref{cor:sumInfVarBound2}, for all $j \neq i$ it holds that
  \begin{equation}
    \label{eq:nonInflInflBound2}
    \Inf_j (f)
    \le q^2 \sum_{z\in T} \sum_a \Inf_j^{a;z}(f)
    = q^2 \sum_{z\in T, a, b>a} \Inf_j^{a,b;z}(f)
    \le \frac{q^6}{2} \frac{2\eps}{nq^7}
    = \frac{\eps}{nq}
  \end{equation}
  For $\sigma \in L_q$,
  let $f_\sigma(x) = f(x_1, \ldots, x_{i-1}, \sigma, x_{i+1}, \ldots, x_n)$
  and note that for $j \neq i$,
  \begin{equation}
    \Inf_j(f) = \frac{1}{q!} \sum_{\sigma \in L_q} \Inf_j(f_\sigma)
  \end{equation}
  while $\Inf_i(f_\sigma) = 0$.
  Hence, by \eqref{eq:nonInflInflBound2}, we have
  \begin{equation*}
    \eps
    \ge
    q \sum_{j \neq i} \Inf_j (f)
    =
    \frac{q}{q!} \sum_{j=1}^n \sum_{\sigma } \Inf_j(f_\sigma)
    \ge
    \frac{2}{q!} \sum_{\sigma} \Dist(f_\sigma, \CONST)
    =
    2 \Dist(f,\DICT_i)
  \end{equation*}
  where the second inequality follows from Lemma~\ref{lem:constDist}
  and Proposition~\ref{prop:sumInfVarBound}.
  But this means that $f$ is $\eps/2$-close to a dictator, contradicting
  the assumption that $\Dist(f,\NONMANIP) \ge \eps$.

  Hence \eqref{eq:unionContains2} holds.
  Therefore we can either find $i \neq j$ and
  $\{a,b\} \neq \{c,d\}$ such that $i \in A^{a,b}$ and $j \in A^{c,d}$
  which proves the theorem,
  or we must have $|A^{a,b}| \ge 2$ for some $\{a,b\}$ while
  $A^{c,d} = \emptyset$ for any $\{c,d\} \neq \{a,b\}$. However, this
  contradicts the first claim in the proof. The result follows.
\end{proof}

As a corollary we have that assuming neutrality and $q\ge 4$ we may
assume $a,b,c,d$ are all distinct,
\begin{corollary}
  \label{cor:neutralPairs2}
  Fix $q \ge 4$ and
  suppose $f \colon L_q^n \rightarrow [q]$ is neutral and
  satisfies $\Dist(f, \DICT) \ge \epsilon$.
  Let $X$ be uniformly selected from $L_q^n$.
  Then either,
  \begin{equation}
   \label{eq:neutralPairs2ManipProb}
    \P(f \text{ is 2-manipulable at } X) \ge \frac{2\eps}{n q^7}
  \end{equation}
  or there exist distinct $i,j \in [n]$
  and distinct $a,b,c,d \in [q]$ such that
  \begin{equation}
   \Inf_i^{a,b;[a:b]} (f) \ge \frac{\eps}{n q^7}
    \text{ and }
   \Inf_j^{c,d;[c:d]} (f) \ge \frac{\eps}{n q^7},
  \end{equation}
\end{corollary}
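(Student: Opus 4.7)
The plan is to mirror the derivation of Corollary~\ref{cor:neutralPairs} from Lemma~\ref{lem:boundaries1}, applying the same two-step reduction to the refined Lemma~\ref{lem:boundaries2}. First I would observe that neutrality of $f$ forces the distribution of $f(X)$ to be uniform on $[q]$: for any permutation $\pi$ of $[q]$ we have $\pi(f(X)) = f(\pi X_1,\ldots,\pi X_n)$, and since $(\pi X_1,\ldots,\pi X_n)$ has the same distribution as $X$, the law of $f(X)$ is invariant under relabeling of the alternatives. Consequently $\Dist(f,g) \ge 1 - 2/q \ge 1/2$ for every $g$ taking at most two values. Combining this with the hypothesis $\Dist(f,\DICT) \ge \eps$ and the trivial bound $\eps \le 1$ yields $\Dist(f,\NONMANIP) \ge \eps/2$.

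Next I would apply Lemma~\ref{lem:boundaries2} to $f$ with $\eps$ replaced by $\eps/2$. The first alternative of that lemma directly gives $\P(f\text{ is 2-manipulable at }X) \ge \frac{4(\eps/2)}{nq^7} = \frac{2\eps}{nq^7}$, which is exactly \eqref{eq:neutralPairs2ManipProb}. Otherwise we obtain distinct $i,j \in [n]$ and pairs $\{a,b\},\{c,d\} \subseteq [q]$ with $c \notin \{a,b\}$ such that $\Inf_i^{a,b;[a:b]}(f), \Inf_j^{c,d;[c:d]}(f) \ge \eps/(nq^7)$. To upgrade to four distinct labels I would invoke neutrality once more: for any permutation $\pi$ of $[q]$, the action of the adjacent transposition $[\pi(c):\pi(d)]$ on $\pi x$ agrees with applying $\pi$ coordinatewise to $[c:d]x$, so by neutrality $\Inf_j^{\pi(c),\pi(d);[\pi(c):\pi(d)]}(f) = \Inf_j^{c,d;[c:d]}(f)$. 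Since $q \ge 4$, I can pick $\pi$ so that $\{\pi(c),\pi(d)\} \subseteq [q] \setminus \{a,b\}$, producing a disjoint pair without decreasing the influence.

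The argument is conceptually identical to the proof of Corollary~\ref{cor:neutralPairs}; the only point that requires a moment of verification is that $\Inf_j^{c,d;[c:d]}(f)$ genuinely depends only on $j$ and not on the pair $\{c,d\}$, which rests on the observation above that adjacent transpositions intertwine with the neutrality action. Beyond that, there is no substantive obstacle---the proof is essentially constant-tracking on top of Lemma~\ref{lem:boundaries2}.
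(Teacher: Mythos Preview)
Your proposal is correct and follows essentially the same approach as the paper's own proof, which is a terse two-line sketch invoking neutrality to get $\Dist(f,\NONMANIP)\ge\eps/2$ and then noting that $\Inf_i^{a,b;[a:b]}(f)$ is independent of the pair $\{a,b\}$. Your version is simply a more careful expansion of those two points, including the verification that the refined influence is invariant under relabeling because adjacent transpositions intertwine with the neutrality action.
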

\begin{proof}
  Neutrality of $f$ implies that $\Dist(f,\NONMANIP) \ge \eps/2$
  \question{Maybe this needs explanation?}
  and that
  $\Inf_i^{a,b}$ does not depend on $\{a,b\}$ so
  we can choose $\{a,b\}$ and $\{c,d\}$ non-intersecting.
\end{proof}

\section{Refined Construction of Manipulation
  Paths}\label{sec:refin-constr-manip}

We now present the second construction of manipulation paths. In this
construction edges along the path will consist of adjacent transpositions
instead of general permutations as in the previous construction.
Again we construct manipulation paths between every edge on
$B_i^{a,b;\adj{a}{b}}$ and every edge on $B_j^{c,d;\adj{c}{d}}$ in a way such
that each canonical path passes through (or ``close'' to) a manipulation point while making sure
that no manipulation point can be passed by too many canonical paths. We call
the paths so constructed {\em refined manipulation paths}. The main goal in the
current construction compared to the previous one is to have better dependency
on $q$, i.e. the number of inverse images of each manipulation point should be
$\poly(n) \poly(q) q!$ instead of $2n(q!)^4 q!$ as in the previous
construction.



Let us first give two canonical paths on single coordinates that will
be used as building blocks when constructing the refined canonical paths:

\begin{proposition} \label{prop:canon4}
Fix four elements $a,b,c,d \in [q]$.
Then there exists a canonical path map
$\Gamma\colon L_q \times L_q \to P_{L_q}(q^2+2q)$ with the following properties:
\begin{itemize}
\item $\Gamma$ is a concatenation of two paths $\I$ and $\Pi$. \item
  The edges in $\I$ are arbitrary adjacent
  transpositions except $[a:b]$, thus keeping the order of $a$ and $b$ fixed.
\item
  The edges in $\Pi$ are arbitrary adjacent
  transpositions except $[c:d]$, thus keeping the order of $c$ and $d$ fixed.
\item
  For every $y\in L_q$
  there are exactly $q!$ pairs $(x,z)\in L_q\times L_q$ for which the last
  vertex of $\I$ (first vertex of $\Pi$) in the path $\Gamma(x,z)$ is equal to $y$.
\item
  For all $y \in L_q$ and $i \ge 0$ we have
  $|\Gamma_i^{-1}(y)| \le q^4 q!$
\end{itemize}
\end{proposition}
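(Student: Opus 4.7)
The plan is to go from $x$ to $z$ via an intermediate vertex $y$ chosen so that $y$ shares with $x$ the relative order of $a,b$ and with $z$ the relative order of $c,d$; this lets phase $\I$ preserve the order of $a,b$ and phase $\Pi$ preserve the order of $c,d$. Since $a,b,c,d$ are distinct, the following choice works: let $\sigma_{a,b}$ denote the operation that exchanges the positions of $a$ and $b$ in a permutation, and set
\begin{equation*}
y = \begin{cases} z & \text{if $x$ and $z$ rank $a,b$ in the same order,} \\ \sigma_{a,b}(z) & \text{otherwise.} \end{cases}
\end{equation*}
Since $\sigma_{a,b}$ fixes $c$ and $d$, the resulting $y$ has the desired orders.

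I would then take phase $\I$ to be the canonical path from $x$ to $y$ produced by Proposition~\ref{prop:canon2} applied to the pair $\{a,b\}$ (or its symmetric variant when $x,y$ rank $b$ above $a$); it has length at most $q^2$ and never uses $\adj{a}{b}$. For phase $\Pi$, if $y=z$ it is empty, and otherwise $y$ and $z$ differ only by exchanging the positions of $a$ and $b$, which is realised by bubbling $a$ rightward to the position of $b$ in $y$ and then bubbling $b$ leftward, using at most $2(q-1)-1 \le 2q$ adjacent transpositions, none of which is $\adj{c}{d}$. Concatenating yields length at most $q^2+2q$ as required.

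The fibre count at the junction is immediate: for any fixed $y \in L_q$, the $x$'s sharing the order of $a,b$ with $y$ number $q!/2$, and each may be paired with exactly the two values $z\in\{y,\sigma_{a,b}(y)\}$, giving precisely $q!$ pairs $(x,z)$ mapped to this $y$.

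For the per-position bound $|\Gamma_i^{-1}(y')| \le q^4 q!$, split by whether $i$ lies in phase $\I$, at the junction, or strictly inside phase $\Pi$. When $i$ is in phase $\I$, Proposition~\ref{prop:canon_sym} applied to the group $H$ of permutations of $[q]\setminus\{a,b\}$ used in the proof of Proposition~\ref{prop:canon2} bounds the pairs $(x,y)$ with $y'$ at position $i$ by $(q!/2)^2/(q-2)! = q(q-1)q!/4$, and each such pair extends to at most two values of $z$. At the junction index the count is exactly $q!$ by the previous paragraph. Strictly inside phase $\Pi$, the bubble-swap trajectory moves $a$ monotonically right and then $b$ monotonically left, so $y'$ together with the step index inside $\Pi$ reconstructs $y$ uniquely; this also determines $z=\sigma_{a,b}(y)$, leaving at most $q!/2$ choices of $x$ with the matching order of $a,b$. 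In all three cases the count is well below $q^4 q!$. The main subtlety to verify carefully is this reconstruction argument inside phase $\Pi$; everything else reduces to a direct application of Proposition~\ref{prop:canon_sym} and the canonical path of Proposition~\ref{prop:canon2}.
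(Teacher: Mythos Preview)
Your construction is the mirror image of the paper's: you make $\I$ the long phase (the Proposition~\ref{prop:canon2} path preserving the $a,b$ order) and $\Pi$ the short bubble-swap of $a,b$, whereas the paper makes $\I$ the short bubble-swap of $c,d$ and $\Pi$ the long Proposition~\ref{prop:canon2} path preserving the $c,d$ order. Both are valid and yield the same length bound and junction count; the only substantive difference is in how you establish the last bullet.

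For that last bound the paper does something much simpler than your phase-by-phase analysis: it observes that the entire map $\Gamma$ is invariant under the group $H$ of permutations of $[q]\setminus\{a,b,c,d\}$ and applies Proposition~\ref{prop:canon_sym} once to get $|\Gamma_i^{-1}(y)|\le (q!)^2/(q-4)!\le q^4 q!$. Your construction is likewise $H$-invariant (in fact invariant under the larger group fixing only $a,b$), so the same one-line argument works for you and avoids the case split entirely.

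The reason this matters is that your reconstruction claim inside $\Pi$ is \emph{false} as stated. Take $q\ge 6$, $a=1$, $b=2$, and $y'=(3,4,5,1,2,6)$ with $j=1$. Then both
\[
y_1=(3,4,1,5,2,6)\quad\text{(first sub-phase, $a$ moved one step right)}
\]
and
\[
y_2=(3,4,5,2,1,6)\quad\text{($a,b$ adjacent, single swap)}
\]
satisfy $\Pi(y_\ell)_1=y'$. In general there are up to two preimages $y$, coming from ``first sub-phase with $a$ on one side of $b$'' versus ``second sub-phase with the opposite initial order''. This does not kill your bound---summing $2q\cdot 2\cdot q!/2=2q\cdot q!$ is still far below $q^4 q!$---but the uniqueness assertion needs to be replaced by ``at most two''. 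Better yet, just invoke Proposition~\ref{prop:canon_sym} on the whole $\Gamma$ as the paper does.
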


\begin{proof}
  First fix $x,z \in L_q$.
  If the order of $c$ and $d$ is the same in $x$ and $z$ then
  $\I$ has zero edges and consists only of the point $x$.
  Otherwise, $\I$ swaps the positions of $c$ and $d$ by first
  bubbling $c$ to the position of $d$ and then bubbling $d$ back to
  the original position of $c$.
  $\Pi$ is constructed as in Proposition~\ref{prop:canon2} while
  preserving the order of $c$ and $d$.

  Note that the length of $\I$ and $\Pi$ is at most
  $2q-2$ and $q^2$ respectively.
  Further, fixing the last point of $\I$ to $y$, there are two
  possibilities for $x$ and $q!/2$ possibilities for $z$.
  Hence, exactly $q!$ possible values for $(x,z)$.

  Finally, by considering
 the group $H$ which acts by permuting arbitrary all elements but
  those labeled by $a,b,c$ and $d$
  and noting that $|H| = (q-4)!$
  it follows from Proposition~\ref{prop:canon_sym} that
  \begin{equation}
    |\Gamma_i^{-1} (y)|
    \le \frac{(q!)^2}{(q-4)!}
    \le q^4 q!
  \end{equation}
\end{proof}

\begin{proposition} \label{prop:canon3}
  Fix four elements $a,b,c,d \in [q]$. Let
  \[
  X = \{ x \in L_q \mid a,b \text{ are adjacent in } x\},
  \]
  Then there exists a canonical path map $\Gamma\colon X \times L_q \to P_{L_q}(q^2+2q)$ with the following properties:
  \begin{itemize}
  \item $\Gamma$ is a concatenation of three paths $\I$, $\Delta$ and $\Pi$.
  \item
    All edges in $\I$ are adjacent transpositions
    not involving $a$ and $b$, thus keeping the \emph{rank} of $a$ and $b$ fixed.
  \item
    The edges in $\Pi$ are arbitrary adjacent
    transpositions except $[c:d]$, thus keeping the \emph{order} of $c$ and $d$ fixed.
  \item
    $\Delta$ consists of a single edge which is a reordering of a block of exactly the $4$ elements $a,b,c,d$.
  \item
    For every $y\in L_q$
    there are at most $2q^3 q!$ pairs $(x,z)\in L_q\times L_q$ for which the last
    vertex of $\I$ in the path $\Gamma(x,z)$ is equal to $y$.
    The same holds for the first vertex of $\Pi$.
  \item
    For all $y \in L_q$ and $i \ge 0$ we have
    $|\Gamma_i^{-1}(y)| \le 2q^3 q!$
  \end{itemize}
\end{proposition}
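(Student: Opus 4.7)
The plan is to mirror the construction in Proposition~\ref{prop:canon4}, exploiting the hypothesis $x\in X$ to freeze $a,b$ during the first phase and then route the remaining elements by a variant of Proposition~\ref{prop:canon2} that protects the pair $(c,d)$.

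In phase $\I$, while keeping $a$ and $b$ in their positions from $x$, I bubble $c$ to the slot immediately adjacent to the $\{a,b\}$-block (using adjacent transpositions between $c$ and the non-$\{a,b\}$ elements that lie between them), and then bubble $d$ next to $c$; if needed, I apply one further swap $[c\colon d]$ (which avoids $a,b$) so that the relative order of $c$ and $d$ at the end of $\I$ agrees with their order in $z$. At the end of $\I$ the four elements $a,b,c,d$ form a contiguous block, $a,b$ occupy exactly their positions from $x$, and the number of adjacent transpositions used is at most $2(q-2)+1$, none involving $a$ or $b$. Phase $\Delta$ is a single adjacent transposition inside the resulting block of four (for instance, the swap of the outermost $\{c,d\}$-element with its neighbor in $\{a,b\}$). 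Phase $\Pi$ then runs the canonical path of Proposition~\ref{prop:canon2} with the pair $(c,d)$ in the role of $(a,b)$, reaching $z$ in at most $q^2$ adjacent transpositions, none of which swap $c$ and $d$. Summing, the total length is at most $q^2+2q$ as required.

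For the fiber bounds, I will apply Proposition~\ref{prop:canon_sym} with $H$ equal to the group of all permutations of $[q]\setminus\{a,b,c,d\}$, acting on $L_q$ by left composition. Then $H$ is fixed-point-free, $|H|=(q-4)!$, $HX=X$ (since $H$ fixes $a$ and $b$, hence their adjacency), and $HL_q=L_q$. Every choice in the construction above (which element to swap with $c$ or $d$ at each step, the direction of bubbling, the single edge $\Delta$, and the Proposition~\ref{prop:canon2} subroutine) depends only on the positions of $a,b,c,d$ in the current permutation and on $z$, so $\Gamma(hx,hz)=h\Gamma(x,z)$ for every $h\in H$. Proposition~\ref{prop:canon_sym} then yields
\[
|\Gamma_i^{-1}(y)| \;\leq\; \frac{|X|\cdot|L_q|}{|H|} \;=\; \frac{2(q-1)!\cdot q!}{(q-4)!} \;=\; 2(q-1)(q-2)(q-3)\cdot q! \;\leq\; 2q^3 q!,
\]
and the identical argument, applied to the $H$-invariant maps $(x,z)\mapsto$ (last vertex of $\I$) and $(x,z)\mapsto$ (first vertex of $\Pi$), gives the same fiber bound for these, as needed.

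The main obstacle is the careful verification that the entire three-phase construction genuinely depends only on the positions of $\{a,b,c,d\}$, so that $H$-invariance holds throughout, together with the bookkeeping that at the end of $\I$ the order of $c$ and $d$ already matches that in $z$—so that $\Pi$ really can be executed via Proposition~\ref{prop:canon2} without ever requiring the forbidden swap $[c\colon d]$. Once this is set up correctly, the length count is elementary and the preimage bound is a direct application of Proposition~\ref{prop:canon_sym}.
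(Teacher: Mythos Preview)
Your overall strategy---three phases $\I,\Delta,\Pi$, followed by the invariance argument under the group $H$ of permutations of $[q]\setminus\{a,b,c,d\}$---is exactly the paper's, and your fiber computation via Proposition~\ref{prop:canon_sym} is correct and in fact more explicit than the paper's. There is, however, a genuine gap in your description of phase~$\I$.

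You write that after bubbling $c$ adjacent to the $\{a,b\}$-block you then ``bubble $d$ next to $c$'', and follow this with an optional swap $[c\colon d]$. But when $c$ and $d$ lie on \emph{opposite} sides of the $\{a,b\}$-block in $x$, there is no way to bring $d$ adjacent to $c$ using only transpositions that avoid $a$ and $b$: $d$ would have to cross both of them. Concretely, take $q=5$ and $x=(c,a,b,e,d)$; after the first step $c$ is already adjacent to $\{a,b\}$, and $d$ cannot reach the slot next to $c$ without swapping past $a$ and $b$. In that situation your $\I$ cannot be carried out as described, and the optional $[c\colon d]$ step---which presupposes $c$ and $d$ adjacent---is not even defined.

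The fix is to drop the requirement that $d$ land next to $c$: simply bubble $d$ until it is adjacent to the three-element block $\{a,b,c\}$, on whichever side $d$ happens to be. Then $a,b,c,d$ form a contiguous block (possibly with $c$ and $d$ separated by $a,b$), and you let $\Delta$ be the single edge that reorders this entire four-element block so that the relative order of $a,b,c,d$ already matches that of $z$. The proposition explicitly allows $\Delta$ to be such a block reordering, not merely an adjacent transposition, and this is precisely what the paper does. After this $\Delta$ the $(c,d)$-order agrees with $z$, so Proposition~\ref{prop:canon2} applies cleanly for $\Pi$. With this correction your $H$-invariance argument and your length and fiber bounds go through unchanged.
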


\begin{proof}
  Fix $x \in X$ and $z \in L_q$.
  The path $\I$ is constructed by first bubbling the
  element $c$ towards the block $ab$ until it is adjacent to this
  block and then doing the same with $d$.

  $\Delta$ consists of a single edge which
  reorders the block of $a,b,c$ and $d$ so that the order matches that in $z$.

  $\Pi$ is constructed as in Proposition~\ref{prop:canon2} while
  preserving the order of $c$ and $d$.

  Note that the length of $\I$ and $\Pi$ is at most
  $2q-1$ and $q^2$ respectively.

  Finally, by considering
 the group $H$ which acts by permuting arbitrary all elements but
  those labeled by $a,b,c$ and $d$
  it follows follows from Proposition~\ref{prop:canon_sym} that
  \begin{equation}
    |\Gamma_i^{-1}(y)|
    \le \frac{|X| |L_q|}{|H|}
    \le \frac{2(q-1)! q!}{(q-4)!}
    \le 2 q^3 q!
  \end{equation}
  The other properties are easy to verify.
\end{proof}

We are now ready to define the canonical path from $B_i^{a,b;[a:b]}(f)$ to
$B_j^{c,d;[c:d]}(f)$. This path is over $(L_q^n)^2$. If we only
consider the first element of each such pair, then the path can
informally be described as being constructed by concatenating three
paths $\I$, $\Delta$ and $\Pi$ where $\I$ is constructed by updating one
coordinate at a time, using the path $\I$ of Proposition~\ref{prop:canon4} for each
coordinate $k \notin \{i,j\}$, using the path $\I$ from Proposition~\ref{prop:canon3}
for coordinate $i$ and finally for coordinate $j$ using the reverse of the path $\Pi$ of
Proposition~\ref{prop:canon3} where the role of elements $a,b$ have
been interchanged with that of $c,d$.
The path $\Delta$ do the middle step from Proposition~\ref{prop:canon4} for both $i$ and $j$.
The path $\Pi$ then updates each
coordinate again using the remaining part of each path above.

\begin{proposition} \label{prop:canon5}
  Fix four distinct elements $a,b,c,d \in [q]$ and distinct
  $i,j\in[n]$. Let
  \[
  X = \{ (x,x') \in (L_q^n)^2 \mid x'=\adji{a}{b}{i} x \,,\, x' \neq x\}
  \]
  and
  \[
  Z = \{ (z,z') \in (L_q^n)^2 \mid z'=\adji{c}{d}{j} z\,,\, z' \neq z\}
  \]
  Then there exists a canonical path map
  $\overline\Gamma\colon X \times Z \to P_{(L_q^n)^2}(2n(q^2+2))$ with the following properties:
  \begin{itemize}
  \item
    $\overline\Gamma$ is a concatenation of three paths
    $\overline\I$, $\overline\Delta$ and $\overline\Pi$.
  \item
    $\overline\I$ stays in $X$ and
    for all edges $((v,v'), (w,w'))$ in $\overline\I$
    both $(v,w)$ and $(v',w')$ consist of single
    adjacent transpositions that preserve the order of $a$ and $b$ in
    each coordinate and keep the rank of $a$ and $b$ fixed in
    coordinate $i$.
  \item
    $\overline\Pi$ stays in $Z$ and
    for all edges $((v,v'), (w,w'))$ in $\overline\Pi$
    both $(v,w)$ and $(v',w')$ consist of single
    adjacent transpositions that preserve the order of $c$ and $d$ in
    each coordinate and keep the rank of $c$ and $d$ fixed in
    coordinate $j$.
  \item
    $\overline\Delta$ consists of a single edge $((v,v'),(w,w'))$
    such that $v,v',w,w'$ are all equal up to a reordering of a block of
    elements $a,b,c,d$ in coordinates $i$ and $j$.
  \item For any $(v,v') \in (L_q^n)^2$ we have
    $|\overline\Gamma^{-1}((v,v'))| \le
    7 n q^{12} (q!)^n$
  \end{itemize}
\end{proposition}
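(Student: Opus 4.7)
The plan is to construct $\overline\Gamma$ by concatenating, coordinate by coordinate, the single-coordinate canonical paths of Propositions~\ref{prop:canon4} and~\ref{prop:canon3}. Fix an arbitrary processing order on $[n]$. The $\overline\I$-phase processes each coordinate $k\notin\{i,j\}$ using the $\I$-sub-path of Proposition~\ref{prop:canon4} (with the pair $(a,b)$ kept in order and $(c,d)$ reserved for $\Pi$), processes coordinate $i$ using the $\I$-sub-path of Proposition~\ref{prop:canon3} (with $a,b$ as the adjacent pair), and processes coordinate $j$ using the reverse of the $\Pi$-sub-path of Proposition~\ref{prop:canon3} with the roles of $(a,b)$ and $(c,d)$ swapped. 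Each single-coordinate step is lifted to $(L_q^n)^2$ by applying the same adjacent transposition in both slots when $k\notin\{i,j\}$ (since the two slots agree there), and by applying the corresponding adjacent transposition independently in each slot in coordinates $i,j$; the choice of transpositions in Propositions~\ref{prop:canon3}/\ref{prop:canon4} never swaps $a$ with $b$ during $\I$, so the offset $v'=\adji{a}{b}{i}v$ is preserved. The middle edge $\overline\Delta$ performs the single $\Delta$-step of Proposition~\ref{prop:canon3} simultaneously at coordinates $i$ and $j$, reordering the $\{a,b,c,d\}$-blocks. The $\overline\Pi$-phase is the symmetric continuation.

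Next, the structural properties. Summing the length bounds of the single-coordinate paths in Propositions~\ref{prop:canon3} and~\ref{prop:canon4} gives a total of at most $n(q^2+2q)$ edges for $\overline\I$ plus $\overline\Pi$ together per phase; including the single edge $\overline\Delta$ gives length at most $2n(q^2+2)$. Invariance $\overline\I\subseteq X$ holds because in each coordinate the $\I$-sub-path consists of adjacent transpositions distinct from $[a:b]$, and in coordinate $i$ of transpositions that do not touch the block $ab$; both facts preserve the relation $v'=\adji{a}{b}{i}v$. Invariance $\overline\Pi\subseteq Z$ is symmetric. The description of $\overline\Delta$ as a reordering of a $(a,b,c,d)$-block at coordinates $i,j$ is inherited from the $\Delta$ clause of Proposition~\ref{prop:canon3}.

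The main work is bounding $|\overline\Gamma^{-1}((v,v'))|$. Fix a target $(v,v')$ and a position $t$ along the path. The position determines the phase, the coordinate $k$ currently being processed, and for every other coordinate whether it has been completed or not yet started. For each $k'\notin\{i,j\}$ with $k'\neq k$, the state at coordinate $k'$ is either $v_{k'}=x_{k'}$ (not started, with $z_{k'}$ unconstrained) or the post-$\I$ value; either way, the fourth bullet of Proposition~\ref{prop:canon4} bounds the number of consistent $(x_{k'},z_{k'})$ pairs by $q!$. For $k'\in\{i,j\}\setminus\{k\}$, the same type of argument using Proposition~\ref{prop:canon3} gives at most $2q^3 q!$ pairs. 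For the currently-processed coordinate $k$, the $|\Gamma_i^{-1}|$ bound of Proposition~\ref{prop:canon4} or~\ref{prop:canon3} gives at most $q^4 q!$ consistent $(x_k,z_k)$; since $v'$ is determined from $v$ up to a fixed transposition in coordinate $i$ or $j$, no additional factor is incurred from the second slot. Multiplying yields a per-position bound of at most $q^4 q!\cdot(2q^3 q!)^2\cdot(q!)^{n-3}=4q^{10}(q!)^n$, and summing over the at most $2n(q^2+2)\le 3nq^2$ positions (for $q\ge 4$) gives the claimed $7nq^{12}(q!)^n$.

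The main obstacle will be the bookkeeping in the last step: one must verify that the per-coordinate bounds from Propositions~\ref{prop:canon3} and~\ref{prop:canon4}, phrased for single-coordinate paths in $L_q$, carry over cleanly to the lifted path in $(L_q^n)^2$, despite the two slots evolving simultaneously and despite coordinates $i$ and $j$ requiring the four-element path of Proposition~\ref{prop:canon3} rather than the two-element path of Proposition~\ref{prop:canon4}. The transfer is legitimate because $v'$ is determined by $v$ up to fixed transpositions in coordinates $i,j$, so consistency with $(v,v')$ reduces to consistency with $v$ in each slot.
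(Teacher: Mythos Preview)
Your construction and counting strategy coincide with the paper's: build $\overline\I$, $\overline\Delta$, $\overline\Pi$ coordinate by coordinate from Propositions~\ref{prop:canon4} and~\ref{prop:canon3} (with the role swap and reversal at coordinate $j$), then bound $|\overline\Gamma^{-1}((v,v'))|$ by fixing the position along the path and multiplying the per-coordinate reconstruction counts. The per-position bound $q^4q!\cdot(2q^3q!)^2\cdot(q!)^{n-3}=4q^{10}(q!)^n$ is exactly what the paper obtains.

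There is, however, a small but genuine gap in your final arithmetic. You sum the per-position bound over all $2n(q^2+2)\le 3nq^2$ positions on the whole path and assert this yields $7nq^{12}(q!)^n$; but $3nq^2\cdot 4q^{10}(q!)^n=12nq^{12}(q!)^n$, which overshoots the target constant. The paper recovers the factor of two you are missing by first observing that $\overline\Gamma^{-1}((v,v'))=\emptyset$ unless $(v,v')\in X\cup Z$, and that $X\cap Z=\emptyset$ (since $i\neq j$ forces $[a{:}b]_i v\neq[c{:}d]_j v$ whenever either is nontrivial). Hence if $(v,v')\in X$ it can only occur as a vertex of $\overline\I$, and one sums over at most $n(q^2+2q+1)=n(q+1)^2\le \tfrac{25}{16}nq^2$ positions (for $q\ge4$), giving $\le \tfrac{25}{4}nq^{12}(q!)^n<7nq^{12}(q!)^n$. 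Insert this observation before the summation and your argument goes through; without it the constant does not close.
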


\begin{proof}
  To define $\overline\Gamma$ fix a starting pair $(x,x') \in X$ and
  an ending pair $(z,z') \in Z$.
  For this pair, the paths $\overline\I$ and $\overline\Pi$ are both constructed as a concatenation of $n$ paths:
  \begin{align}
    \label{eq:canon5ConcatPath}
    \overline\I = \overline\I(1),\ldots,\overline\I(n)
    \quad \text{and} \quad
    \overline\Pi = \overline\Pi(1),\ldots,\overline\Pi(n)
  \end{align}
  In order to define these paths first note that since $\overline\I$ must stay in
  $X$, every vertex $(v,v')$ in $\overline\I$ must satisfy $v'=[a:b]_i v$.
  Thus it is enough to describe the projection of $\overline\I$ to the first
  coordinate of each pair. Let $\I$ be this projection (so that if
  the $j$'th vertex of $\overline\I$ is $(v,v')$, then the $j$'th vertex of
  $\I$ is $v$).
  Similarily since $\overline\Pi$ must stay in $Z$, every vertex $(v,v')$ in $\overline\Pi$
  satisfies $v'=[c:d]_j v$ and it is enough to describe $\Pi$ - the projection
  of $\overline\Pi$ to the first coordinate of each pair.

  Now, for any path $\Gamma = (u(0), \ldots, u(\ell)) \in P_{L_q^n}$ let
  $\Gamma_k = (u_k(0), \ldots, u_k(\ell))$
  denote its restriction to coordinate k.
  The projections $\I$ and $\Pi$ can then be defined as follows,
  \begin{itemize}
  \item
    For any $k=1,\ldots,n\!-\!1$
    the last vertex of $\I(k)$ is equal to
    the first vertex of $\I(k+1)$,
    and the last vertex of $\Pi(k)$ is equal to
    the first vertex of $\Pi(k+1)$.
  \item
    $\forall k, m \neq k: \I_m(k)$ and $\Pi_m(k)$ are constant paths, i.e. $\I(k)$ and $\Pi(k)$ only change in coordinate $k$.
  \item
    $\forall k \notin \{i,j\}: \I_k(k)$ and
    $\Pi_k(k)$ are the paths $\I$ and $\Pi$ making up
    $\Gamma(x_k,z_k)$ in Proposition~\ref{prop:canon4}.
  \item
    $\I_i(i)$ and $\Pi_i(i)$ are the paths $\I$ and $\Pi$ making up
    $\Gamma(x_i,z_i)$ in
    Proposition~\ref{prop:canon3}.
  \item
    $\I_j(j)$ and $\Pi_j(j)$ are, respectively,
    the reverse of the paths $\Pi$ and $\I$ making up
    $\Gamma(z_j,x_j)$ in Proposition~\ref{prop:canon3} with the role
    of $(a,b)$ there swapped with that of $(c,d)$.
  \end{itemize}
  Note that this uniquely determines $\overline\Delta$ as the single edge from the last vertex of $\overline\I$ to the first vertex of $\overline\Pi$.
  The three statements about the edges of $\overline\Gamma$ now follow from
  Proposition~\ref{prop:canon4} and~\ref{prop:canon3}.

  Finally, to compute $|\overline\Gamma^{-1}((v,v'))|$ for
  $(v,v') \in (L_q^n)^2$
  we need to count the number of $(x,x') \in X$ and $(z,z') \in Z$
  such that $(v,v')$ is a vertex on the path.
  Note that $|\overline\Gamma^{-1}((v,v'))|=0$ unless $(v,v') \in X$ or $(v,v') \in Z$.
  Without loss of generality assume that $(v,v') \in X$ (the argument for $(v,v') \in Z$ is symmetric).

  Then $v$ could belong to any of the $n$ paths $\I(1),\ldots, \I(n)$.
  Suppose it belongs $\I(m)$.
  No matter what $m$ is, $v$ can be any of
  at most $q^2+2q+1$ vertices on the path $\I(m)$.
  If $m \notin \{i,j\}$ then by Proposition~\ref{prop:canon4} there can be at most $q^4q!$ possibilities for $(x_m,z_m)$,
  and if $m \in \{i,j\}$ then by Proposition~\ref{prop:canon3} there can be at most $2q^3q! < q^4q!$ possibilities for $(x_m,z_m)$.
  For all other coordinates $k \neq m$ we have that $v_k$ equals either $x_k$ or the last vertex of $\I(k)$.
  In both cases there are by Proposition~\ref{prop:canon3} at most $2q^3q!$ possibilities for $(x_k,z_k)$ if $k \in \{i,j\}$,
  and by Proposition~\ref{prop:canon4} exactly $q!$ possibilities for $(x_k,z_k)$ if $k \notin \{i,j\}$ and
  Finally, since $(x,x') \in X$ and $(z,z') \in Y$ there is
  at most one possibility for $x'$ and $z'$ given $x$ and $z$.
  Hence we have,
  \begin{equation}
    |\overline\Gamma^{-1}((v,v'))|
    \le
    n (q^2+2q+1) q^4 q!
    (2 q^3 q!)^2
    (q!)^{n-3}
    \le
    7nq^{12}(q!)^n
  \end{equation}
  since $q \ge 4$.
\end{proof}

\subsection{Proof of Theorem~\ref{thm:refNeutralBound}}

Our main claim is the following

\begin{lemma}
  \label{lem:revInverseImageBound}
  For any $f \colon L_q^n \rightarrow [q]$,
  distinct $i,j \in [n]$ and
  distinct $a,b,c,d \in [q]$
  there exists a mapping
  $h \colon B_i^{a,b;[a:b]}(f) \times B_j^{c,d;[c:d]}(f) \rightarrow M$
  where
  \[
  M=\{x \in L_q^n \mid f \text{ is $4$-manipulable at } x\}
  \]
  such that for any $x \in M$
  \begin{equation}
    |h^{-1} (x) | \le 10^4 n q^{16} (q!)^n
  \end{equation}
\end{lemma}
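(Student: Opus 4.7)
The plan is to mirror the argument of Lemma~\ref{lem:inverseImageBound} but to replace the crude path of Definition~\ref{def:canon1} with the refined adjacent-transposition path $\overline\Gamma$ of Proposition~\ref{prop:canon5}. Given edges $(x,x') \in B_i^{a,b;[a:b]}(f)$ and $(z,z') \in B_j^{c,d;[c:d]}(f)$, I consider the canonical path $\overline\Gamma((x,x'),(z,z')) = \overline\I \cdot \overline\Delta \cdot \overline\Pi$: along $\overline\I$ every pair satisfies $v' = [a:b]_i v$; along $\overline\Pi$ every pair satisfies $v' = [c:d]_j v$; and the single middle edge $\overline\Delta$ only reorders the block $\{a,b,c,d\}$ inside coordinates $i,j$. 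Walking along this path, the value pair $(f(v), f(v'))$ starts at $(a,b)$ and ends at $(c,d)$, so it must change somewhere, and I define $h((x,x'),(z,z'))$ to be a manipulation point produced from the first place of change.

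The case analysis has two flavors. First, if $(f(v), f(v'))$ first changes at some edge in $\overline\I$, then that edge projects in both components to a single adjacent transposition which preserves the order of $a$ and $b$ in each coordinate and keeps the rank of $a,b$ fixed in coordinate $i$. If the swap realizing the transition does not equal $[e:e']$ for the changed value pair $\{e,e'\}$, then Lemma~\ref{lem:nonManipBoundary} immediately supplies a $2$-manipulation point at one of the four points of the edge. Otherwise the swap matches the changed pair, and the preservation of the $a,b$-order forces the new value to lie outside $\{a,b\}$; then a restriction-and-Gibbard-Satterthwaite argument analogous to Lemma~\ref{lem:nonManipTriple} (applied to the three alternatives $a,b$ and the new value, in the two involved coordinates) yields a manipulation point lying near the transition, possibly with the new candidate shifted arbitrarily within coordinates $i$ or $j$. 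A symmetric argument handles transitions inside $\overline\Pi$. If instead $(a,b)$ persists throughout $\overline\I$ and $(c,d)$ persists throughout $\overline\Pi$, then at $\overline\Delta = ((v,v'),(w,w'))$ the function $f$ takes the four distinct values $a,b,c,d$ on the four points, which agree outside coordinates $i,j$ and inside differ only by reorderings of the $\{a,b,c,d\}$ block; restricting $f$ to these $(4!)^2$ reorderings defines a two-voter, four-candidate function that takes at least three values and is not a dictator on either voter, and Theorem~\ref{thm:GS} extracts a manipulation point of the restriction which lifts to a $4$-manipulation point $y$ of $f$ inside the block.

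With $h$ defined this way, I bound $|h^{-1}(y)|$ for a fixed $y$. Any preimage corresponds to a canonical path containing some nearby vertex $(v,v')$, where \emph{nearby} means that $y$ is obtained from $v$ or $v'$ by permuting the $\{a,b,c,d\}$ block in at most two coordinates and possibly by shifting a bounded number of candidates along coordinates $i$ or $j$. The path length is at most $2n(q^2+2q)$ by Proposition~\ref{prop:canon5}, giving $O(nq^2)$ candidate positions on the path; at each position, the number of possible $(v,v')$ compatible with a given $y$ is bounded by a constant multiple of $(4!)^2$ in the $\overline\Delta$ case and by a constant multiple of $q^4(4!)^2$ in the transition cases (the $q^4$ coming from shifting up to four candidates within two coordinates). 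Combining these with the bound $|\overline\Gamma^{-1}((v,v'))| \le 7nq^{12}(q!)^n$ from Proposition~\ref{prop:canon5} and absorbing numerical constants yields $|h^{-1}(y)| \le 10^4 n q^{16}(q!)^n$.

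The main obstacle is the transition case analysis: proving that every value-pair transition along $\overline\I$ or $\overline\Pi$ yields a manipulation point in a neighborhood whose size is polynomial in $q$ rather than $q!$ requires carefully combining Lemmas~\ref{lem:nonManipBoundary} and~\ref{lem:nonManipTriple} with the local edge structure of $\overline\I$ and $\overline\Pi$ coming from Proposition~\ref{prop:canon5}, and tracking which small permutations of the alternatives relate the manipulation point to the triggering edge. The Gibbard-Satterthwaite step at $\overline\Delta$ and the final multiplication of factors are by comparison routine.
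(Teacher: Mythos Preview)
Your approach is essentially that of the paper: build $\overline\Gamma$ from Proposition~\ref{prop:canon5}, walk along it watching the value pair $(f(v),f(v'))$, and in each of the three cases (transition inside $\overline\I$, transition inside $\overline\Pi$, or persistence to $\overline\Delta$) extract a nearby $4$-manipulation point using Lemmas~\ref{lem:nonManipBoundary}, \ref{lem:nonManipTriple}, or a direct Gibbard--Satterthwaite argument on the $(4!)^2$ block reorderings. This is exactly the paper's scheme.

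Two places need tightening. First, in the $\overline\Delta$ case you must verify that the restricted two-voter function is well-defined, i.e.\ that $f$ stays in $\{a,b,c,d\}$ on every reordering of the block. The paper handles this by observing that each such reordering is reached from $v$ by adjacent transpositions among $a,b,c,d$, so Lemma~\ref{lem:nonManipBoundary} either keeps the value inside $\{a,b,c,d\}$ or already produces a $2$-manipulation point.

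Second, your counting double-counts. The path-length factor $O(nq^2)$ should \emph{not} be multiplied in once you invoke $|\overline\Gamma^{-1}((v,v'))|\le 7nq^{12}(q!)^n$: that bound from Proposition~\ref{prop:canon5} already sums over all positions at which $(v,v')$ can sit on a path. The correct accounting is: given the manipulation point $y$, bound the number of ``nearby'' vertices $(v,v')$ --- the paper gets at most $2\cdot(4!\,q^2)^2$, the $q^2$ per coordinate coming from one arbitrarily shifted element and the choice of which element it was --- and then multiply only by $|\overline\Gamma^{-1}((v,v'))|$. As you wrote it, the product comes out as $O(n^2q^{18})(q!)^n$, which does not fit under $10^4\,n\,q^{16}(q!)^n$. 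Drop the path-length factor and the arithmetic matches.
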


\begin{proof}
  Fix $(x,x') \in B_i^{a,b;[a:b]}(f)$ and
  $(z,z') \in B_i^{c,d;[c:d]}(f)$.
  Then there exist a refined canonical path $\overline\Gamma=\overline\Gamma((x,x'),(z,z'))$ (being a
  concatenation of three paths $\overline\I$, $\overline\Delta$ and $\overline\Pi$) satisfying the properties of Proposition~\ref{prop:canon5}.
  We now claim the following:

  \vspace{0.1cm}
  \textit{Claim}: Somewhere on this path there will be
  a vertex $(v,v')$ such that $v$
  is close to a 4-manipulation point $y$,
  in the sense that it differs from $y$ in at most 2 coordinates,
  and in each of those two coordinates it only differs by
  a reordering of the elements $a,b,c$ and $d$
  and an arbitrary shifting of a single element in $[q]$.
  \vspace{0.1cm}

  We will take $h((x,x'),(z,z'))$ to be an arbitrary 4-manipulation point $y$
  satisfying the closeness requirement in the claim for some vertex on the path.

  Now note that along this path at least one of the following three things must
  happen:
  \begin{enumerate}
  \item
    Somewhere along the first part $\overline\I$ of the path
    there is an edge $((v,v'),(w,w'))$
    such that $(f(v),f(v'))=(a,b)$ but $(f(w),f(w')) \neq (a,b)$.
  \item
    Somewhere along the second part $\overline\Pi$ of the path
    there is an edge $((v,v'),(w,w'))$
    such that $(f(v),f(v')) \neq (c,d)$ but $(f(w),f(w')) = (c,d)$.
  \item
    Let $((v,v'),(w,w'))$ be the single edge in $\overline\Delta$.
    Then $(f(v),f(v'))=(a,b)$ and $(f(w),f(w')) = (c,d)$.
  \end{enumerate}
  We argue that the claim follows in each of these cases:
  \begin{enumerate}
  \item
    If $e:=f(w) \neq a$, Lemma~\ref{lem:nonManipBoundary} implies that
    $w=[a:e]_kv$ for some $k \in [n]$
    (else $v$ or $w$ is a 2-manipulation point, yielding the claim).
    Since the order of $a$ and $b$ is preserved in all coordinates in $\overline\I$
    we must have $e \neq b$.
    Further $k \neq i$, since the rank of $a$ is preserved in coordinate $i$ in
    this part of the path.
    Thus $(v,v') \in B_i^{a,b;T}$ and $(v,w) \in B_k^{a,e;T}$
    and Lemma~\ref{lem:nonManipTriple}
    implies that there is a 3-manipulation point $y$ which only differ
    from $v,v',w$ and $w'$ in coordinates $i$ and $k$.
    Furthermore, $y_k$ is equal to $v_k$ or $w_k$ except that
    the position of $b$ may have been shifted arbitrarily,
    and $y_i$ is equal to $v_i=w_i$ or $v'_i=w'_i$ except that the
    position of $e$ may have been shifted arbitrarily.
    Thus it is either close to $v$ or $w$, in the sense of the claim.
    \todo{This argument has to be checked again. Don't we need to permute $e$ also sometimes, while shifting a?}

    The other possibility is that $e:= f(w') \neq b$, for which the
    claim follows by an analogous argument (remembering that $v$ and
    $v'$ only differ by an adjacent swap of $a,b$).
  \item
    The claim again follows analogously to the previous case.
  \item
    In this case Proposition~\ref{prop:canon5} guarantees that $v,v',w,w'$ only
    differ by a reordering of adjacent blocks of elements $a,b,c,d$ in
    coordinates $i$ and $j$.
    Thus we may define a new social choice function
    $f':L^2_{\{a,b,c,d\}} \rightarrow \{a,b,c,d\}$
    by letting $f'(u) = f(g(u))$ where $g(u) \in L_q^n$ is obtained from $v$ by
    simply reordering the two blocks of elements $a,b,c,d$ in coordinates $i$
    and $j$ so that they match $u_1$ and $u_2$ respectively.
    Note that this reordering can be done using adjacent transpositions
    involving $a,b,c$ and $d$ only. Hence by
    Lemma~\ref{lem:nonManipBoundary},
    $\forall u: f(g(u)) \in \{a,b,c,d\}$, or else one of the intermediate points under this
    reordering using adjacent transpositions must be a 2-manipulation point,
    yielding the claim.

    So we may assume that $f'$ is well-defined,
    i.e. takes values in $\{a,b,c,d\}$.
    However since $f'$ takes on all four values and is not a dictator,
    Gibbard-Satterthwaite (Theorem~\ref{thm:GS}) implies that $f'$ must have a
    manipulation point $u$ but then $g(u)$ must be a 4-manipulation point of $f$,
    proving the claim.
  \end{enumerate}
  Now fix $y \in M$. In order to count $|h^{-1}(y)|$ note that
  there can be at most $(4! q^2)^2$
  values of $v$ satisfying the closeness requirement to $y$ given in the claim.
  Given $v$ there are only $2$ possibilities for the vertex $(v,v')$ (depending on whether the vertex is in $\I$ or in $\Pi$).
  Further, by Proposition~\ref{prop:canon5} their can be at most
  $7nq^{12}(q!)^n$ canonical paths containing any specific vertex.
  Thus,
  \begin{equation}
    |h^{-1}(y)|
    \le
    2 (4!q^2)^2
    7nq^{12}(q!)^n
    \le
    10^4 n q^{16} (q!)^n
  \end{equation}
\end{proof}

\begin{proof}[Proof of Theorem \ref{thm:refNeutralBound}]
  By Corollary \ref{cor:neutralPairs2}, either we are done
  or we can find distinct $i,j \in [n]$ and
  distinct $a,b,c,d \in [q]$
  such that, by \eqref{eq:inf2Boundary},
  \begin{equation}
   |B_i^{a,b;[a:b]} (f)|
    \ge
    \frac{2\eps}{n q^7} (q!)^n
    \text{ and }
   |B_j^{c,d;[c:d]} (f)|
    \ge
    \frac{2\eps}{n q^7} (q!)^n
  \end{equation}
  Let $M=\{x \in L_q^n \mid f \text{ is $4$-manipulable at } x\}$.
  Applying Lemma \ref{lem:revInverseImageBound} we see that
  \begin{equation}
    |M|
    \ge
    \frac{|B_i^{a,b;[a:b]} (f) \times B_j^{c,d;[c:d]} (f)|}{10^4nq^{16} (q!)^n}
    \ge
    \frac{4\eps^2}{10^4n^3 q^{30}} (q!)^n
  \end{equation}
  Hence,
  \begin{equation}
    \P(f\text{ is 4-manipulable at } X)
    \ge
    \frac{\eps^2}{10^4n^3 q^{30}}
  \end{equation}
\end{proof}

\section{Open problems}\label{sec:disc-open-problem}
We list a few natural open problems that arise from our work.
\begin{itemize}
\item In Corollary~\ref{cor:refNeutralBound} we prove that a random
  pair $x$, $y$ is a manipulation point with non-negligible probability, if
  $y$ is obtained from $x$ by a random change in $4$ adjacent
  alternatives, applied to a random coordinate. For the case where $y$
  is obtained from $x$ by simply re-randomizing one of the
  coordinates, which is the one considered in~\cite{FrKaNi:08}, we
  only have a lower bound where $q!$ appears in the
  denominator (see Corollary~\ref{cor:NeutralBound}). It would be
  interesting to prove a polynomial lower bound in the latter case.
\item As is often the case with arguments involving canonical paths,
  we suspect that the parameters we obtained are not tight. It would
  be interesting to find the correct tight bounds. In particular, we
  are not even sure that the lower bound on the number of manipulation
  points must decrease with $q$---the correct bound may even increase
  as a function of $q$ for neutral functions. \item Our results, as
  well as those of~\cite{FrKaNi:08}, apply only to neutral functions.
  Can one prove a quantitative Gibbard-Satterthwaite theorem for
  non-neutral functions?

\item It would also be interesting to consider the
  Gibbard-Satterthwaite theorem quantitatively for non-uniform
  distributions over preferences.
 \end{itemize}

\section*{Acknowledgments}
The authors would like to thank Jeffrey Steif for helpful comments on an earlier draft.

\bibliographystyle{alpha}
\bibliography{gsbib1}

\end{document}
